\def\R{\mathbb{R}}
\def\Q{\mathbb{Q}}
 \def\Z{\mathbb Z}
\def\Hom{\mbox{Hom}}
\def\sF{\mathcal{F}}
\def\sC{\mathcal C}
\def\sI{\mathcal I}
\def\sL{\mathcal L}
\def\sP{\mathcal P}
\def\sG{\mathcal G}
\def\MCG{\mbox{MCG}}
\def\sM{\mathcal M}
\def\tt{\tau}
\def\h{\mathfrak h}
\def\j{\mathfrak j}
\def\r{\mathfrak c}
\DeclareMathAlphabet{\mathpzc}{OT1}{pzc}{m}{it}
\def\mod{\mbox{mod}}
\def\wzeta{\widetilde \zeta}
\newtheorem{theorem}{Theorem}[section]
\newtheorem{corollary}[theorem]{Corollary}
\newtheorem{lemma}[theorem]{Lemma}
\newtheorem{proposition}[theorem]{Proposition}
\newtheorem{definition}[theorem]{Definition}
\newtheorem{remark}[theorem]{Remark}
\newtheorem{question}[theorem]{Question}
\newtheorem{example}[theorem]{Example}
  \newtheorem{introthm}{Theorem}
\begin{document}
\title{Quotient families of mapping classes}
\author{Eriko Hironaka} 
\thanks{\small \noindent 
This work was partially supported by a grant from the Simons Foundation \#426722.}
\date{}

\begin{abstract}  In this paper, we define {\it quotient families} of mapping classes generalizing an example of Penner. We show that these 
mapping classes are  contained in a single flow-equivalence class of monodromies of a fibered 3-manifold $M$.    
The special structure of quotient families helps 
to compute global invariants of $M$ such as the  Alexander polynomial, and (in the case when $M$ is hyperbolic) the Teichm\"uller 
poliynomial of the associated fibered face.  These in turn give information about the homological and geometric dilatations of the
mapping classes in the quotient family.
\end{abstract}

\subjclass{14J50, 37F15, 57M27}
\maketitle

\section{Introduction}

In \cite{Penner91} Penner constructed a sequence of pseudo-Anosov mapping classes, sometimes called {\it Penner wheels},
with asymptotically small dilatations. 
In this paper we define a generalization of Penner wheels called {\it quotient families}, and put them in  the framework of the Thurston-Fried-McMullen
fibered face theory \cite{Thurston:norm} \cite{Fried82} \cite{McMullen:Poly}.   Specifically, we show that each  quotient family corresponds naturally to a
linear segment of a  fibered face of a  3-manifold.   Putting quotient families in the fibered face context 
helps to determine their Nielsen-Thurston classification, and in the pseudo-Anosov case makes it possible to compute dilatations via the Teichm\"uller polynomial.

\subsection{Pseudo-Anosov mapping classes, dilatations, and fibered faces}
Let $S$ be a connected oriented surface of finite type with negative Euler characteristic $\chi(S)$. A {\it mapping class} $\phi : S \rightarrow S$ is an orientation preserving homeomorphism modulo isotopy (both the mapping class and the isotopies are assumed to fix the boundary if $S$ has boundary). 
The Nielsen-Thurston classification states that mapping classes are either periodic,  reducible, or pseudo-Anosov, where $(S,\phi)$ is
{\it  pseudo-Anosov} if $\phi$ preserves a pair of transverse measured singular {\it stable} and {\it unstable} foliations $(\mathcal F^\pm, \nu^\pm)$ 
and $\phi^*(\nu^\pm) =  \lambda^{\pm 1} \mu^\pm$ for some $\lambda > 0$ \cite{Thurston88}.    The constant 
$$
\lambda(\phi) = \lambda
$$
is uniquely determined by $(S,\phi)$
and is called the  {\it dilatation} of $\phi$.  The singularities of $\mathcal F^\pm$ are called the
the {\it singularities of $\phi$}.  See, for example,  \cite{FM:MCG} for more details.

In \cite{Penner91}, Penner  constructed a sequence of
pseudo-Ansoov mapping classes  
 $(R_g,\psi_g)$, for $g \ge 3$, where $R_g$ is a closed surface of genus $g \ge 2$ and $\lambda(\psi_g)^g \leq 11$.  Using this he showed that  the minimum expansion factor $l_g$ for a pseudo-Anosov mapping
classes of on a genus $g$ surface behaves asymptotically like $\log(l_g) \asymp \frac{1}{g}$ as a function of $g$, that is, there is a constant $C \ge 1$ so that
$$
\frac{1}{C g}\ \leq\ \log (l_g) \ \leq\  \frac{C}{g}.
$$

Since then the Thurston-Fried-McMullen fibered face theory (recalled below) has been used to show that pseudo-Anosov mapping classes with bounded
{\it normalized dilatation}  $L(S,\phi) = \lambda(\phi)^{|\chi(S)|}$ are naturally grouped together into dynamical families: 
these arise as subcollections of
monodromies of  hyperbolic fibered 3-manifolds that have first Betti number greater than or equal to two \cite{McMullen:Poly},
and are parameterized by rational points on compact subsets of fibered faces.
Furthermore, the Farb-Leininger-Margalit Universal Finiteness Theorem implies that
any family of pseudo-Anosov mapping classes with bounded normalized dilatations is contained in the
set of monodromies of a finite set of fibered 3-manifolds  up to fiber-wise Dehn fillings \cite{FLM09}.

\subsection{Penner wheels} 
Penner wheels are defined as follows.  
Consider the genus $g$ surface $R_g$ as a surface with rotational symmetry of order $g$ fixing two points, as
drawn in Figure~\ref{Pennerexample-fig}, and 
let  $\zeta_g$ be the counterclockwise rotation by the angle
$\frac{2\pi}{g}$.
For a simple closed curve $\gamma$ on a surface, let $\delta_{\gamma}$ be the right Dehn twist centered at  $\gamma$.  
Let $\eta_g =  \delta_{c_g}  \delta_{b_g}^{-1} \delta_{a_g}$ be the product of Dehn twists centered along the labeled curves
$a_g,b_g,c_g$ drawn in Figure~\ref{Pennerexample-fig}.   Then Penner's sequence consists of the pairs $(R_g,\psi_g)$, where $\psi_g = \zeta_g  \eta_g$.
\begin{figure}[htbp]
\begin{center}
\includegraphics[width=1.5in]{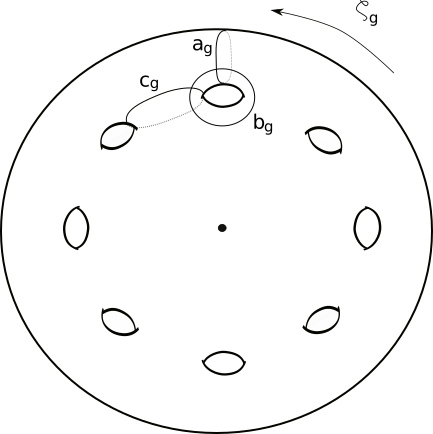}
\caption{Penner wheel on a surface of genus $g$ with rotational symmetry fixing two points, one of which is marked in the figure as a central dot. The other is the image of this point under front to back symmetry.}
\label{Pennerexample-fig}
\end{center}
\end{figure}

\subsection{Quotient families}\label{quot-sec}
To define quotient families, consider a triple $(\widetilde S,\widetilde \zeta,\widetilde \eta)$, where $\widetilde S$ is an oriented surface of infinite
type and 
$$
\widetilde \zeta, \widetilde \eta: \widetilde S \rightarrow \widetilde S
$$
are homeomorphisms satisfying the following:
\begin{enumerate}
\item $\widetilde \zeta$ generates a  properly discontinuous, orientation-preserving, fixed-point free,  infinite cyclic action on
$\widetilde S$;
\item $\widetilde S/\widetilde \zeta$ is a surface of finite type; 
\item the action of $\widetilde\zeta$ has a fundamental domain $\Sigma_0$, a compact, connected, oriented surface of finite type
with boundary $\partial \Sigma_0$, satisfying
\begin{enumerate}[(i)]
\item $\widetilde \zeta(\Sigma_0) \cap \Sigma_0 \subset \partial \Sigma_0$, and
\item  $\widetilde \zeta^2(\Sigma_0) \cap \Sigma_0 = \emptyset$;
\end{enumerate}
and
\item the support of $\widetilde\eta$ is strictly
contained in
$$
\Sigma_0 \cup \widetilde \zeta \Sigma_0 \cup \cdots \cup \widetilde\zeta^{m_0} \Sigma_0
$$
for some finite $m_0$.
\end{enumerate}
We say that the triple $(\widetilde S, \widetilde\zeta,\widetilde \eta)$ forms a {\it template} of {\it width} $m_0$.

\begin{figure}[htbp] 
   \centering
   \includegraphics[width=2.5in]{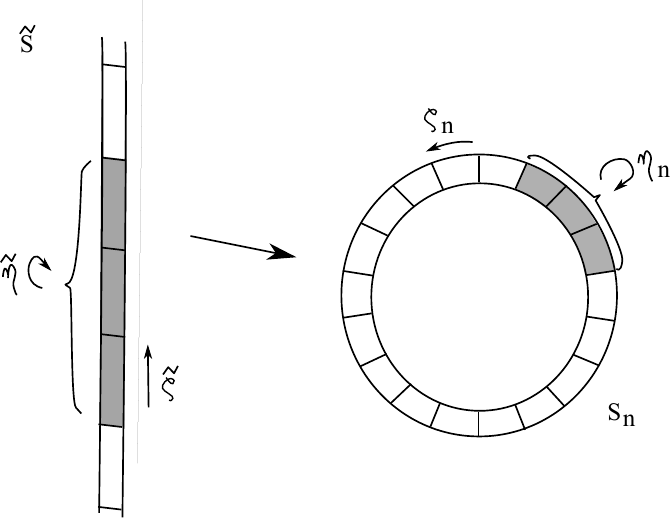} 
   \caption{This is an illustration of a template $(\widetilde S,\widetilde \zeta, \widetilde \eta)$ of width $m_0=2$, and the quotient mapping
   class $(S_n,\zeta_n^k \circ \eta_n)$. The supports of 
   $\widetilde \eta$ on $\widetilde S$, and $\eta_n$ on $S_n$ are shaded.}
   \label{bamboo-fig}
\end{figure}

Let $I_{m_0}(\Q)$ be the rational points on the open interval $I_{m_0} = (0,\frac{1}{m_0})$. 
From a template $(\widetilde S,\widetilde \zeta,\widetilde \eta)$ with width $m_0$ we define an associated {\it quotient family} 
$Q(\widetilde S,\widetilde \zeta,\widetilde \eta)$
parameterized
by  $I_{m_0}(\Q)$.
 For $\r \in I_{m_0}(\Q)$, where $\r = \frac{k}{n}$ is in reduced form,  define a mapping class 
 $\widetilde \phi_\r :\widetilde S \rightarrow \widetilde S$
 as follows.
Let $\widetilde \eta_n$ be the composition
$$
\widetilde \eta_n = \underset{r \in \Z}\circ {\widetilde \zeta}^{rn}\widetilde \eta {\widetilde \zeta}^{-rn}.
$$
(See illustration in Figure~\ref{bamboo-fig}).
This is well-defined on $\widetilde S$ since $n > m_0$ implies that the supports of
${\widetilde \zeta}^{rn}\widetilde \eta {\widetilde \zeta}^{-rn}$ are disjoint for distinct $r$.

Let
 $$
 \widetilde \phi_{\r} = \widetilde \zeta^{\overline k_n} \circ \widetilde \eta_n.
 $$
 where $\overline k_nk = 1 (\mod \ n)$.  
Since $\widetilde \eta_n$ is invariant under conjugation by $\widetilde \zeta^n$, it defines a well-defined homeomorphism
$\eta_n$ on the quotient space $S_{\r}=\widetilde S/\zeta^n$.  Similarly, $\widetilde \zeta$ defines a homeomorphism
$\zeta_n$ on the quotient space $S_{\r}$.   Let
 $$
 \phi_{\r} = (\zeta_n)^{\overline k_n} \circ \eta_n.
 $$
Then $\widetilde \phi_{\r}$ is a lift of $\phi_{\r}$ by the covering map $\widetilde S \rightarrow S_{\r}$.

The {\it quotient family} associated to the template $(\widetilde S,\widetilde \zeta,\widetilde \eta)$ is
defined by
$$
Q(\widetilde S,\widetilde \zeta,\widetilde \eta) = \{(S_{\r},\phi_{\r})\ |  \ \r \in I_{m_0}(\Q)\}.
$$

\begin{example}[Penner Wheel as a sequence in a quotient family]{\em
Let $R_g^0$ be the surface of genus $g$ and two boundary components obtained by
 removing two disks on $R_g$
surrounding the fixed points of $\zeta_n$ and let $\phi_g^0$ be the
restriction of $\phi_g$ on $R_g^0$.  Let $\widetilde S$ be as in Figure~\ref{original_lifts-fig}, a homeomorphism $\zeta$
acting as vertical translation on a fundamental domain $\Sigma_0$ and its orbits.  Let 
$\widetilde a$, $\widetilde b$ and $\widetilde c$ be distinguished points on $\widetilde S$,
and let $\widetilde \eta  =  \delta_{\widetilde c}  \delta_{\widetilde b}^{-1} \delta_{\widetilde a}$.
\begin{figure}[htbp]
\begin{center}
\includegraphics[width=3in]{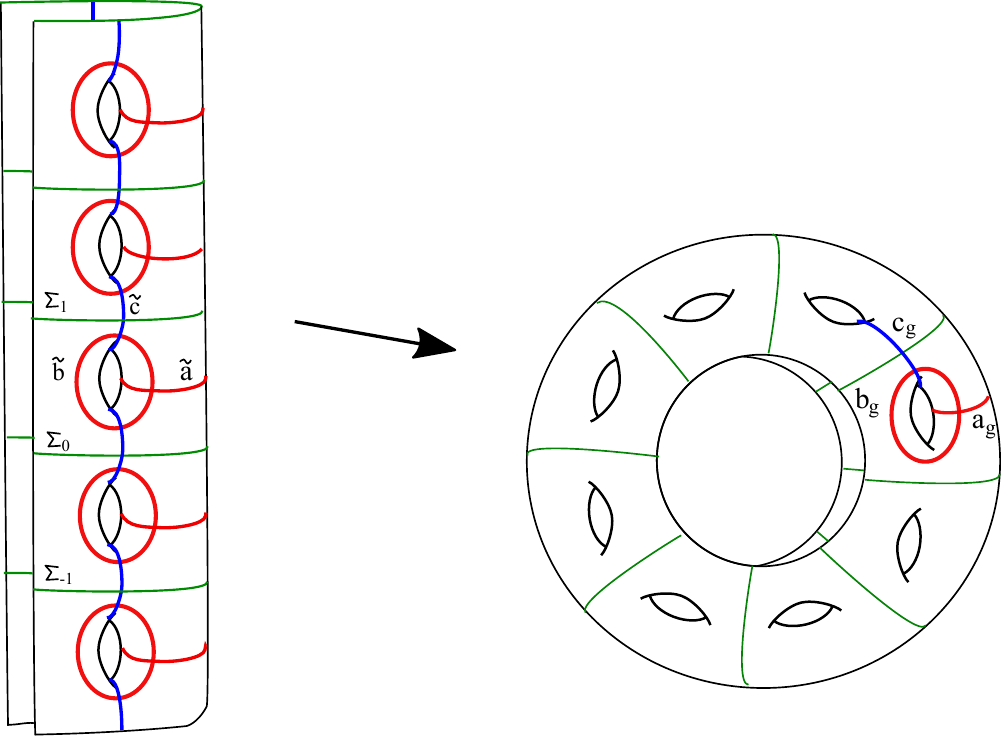}
\caption{Cyclic covering $\widetilde S$ and quotient surface $S_g$ for Penner's example.}
\label{original_lifts-fig}
\end{center}
\end{figure}
Then $(R_g^0, \phi_g^0)$, $g \ge 2$ is a sequence in the quotient family associated to $(\widetilde S,\widetilde \zeta, \widetilde \eta)$  associated
to the sequence $\frac{1}{g} \in \sI_1(\Q)$.
}
\end{example}

\subsection{Fibered faces and parameterizations of flow-equivalence classes}\label{fibered-sec}
Let $\MCG(S)$ be the group of mapping classes defined on $S$.
Fibered face theory gives a way to partition the set of all mapping classes 
$$
\MCG = \{(S,\phi) \ : \ 
\phi \in \MCG(S), \mbox{$S$ a connected orientable surface of finite type}\}
$$
into families with related dynamics.   Each mapping class $(S,\phi)$  defines 
\begin{enumerate}
\item a 3-manifold $M$ with mapping torus structure $M =  [0,1] \times S/{(x,1) \sim (\phi(x),0)}$;
\item a distinguished fibration $\rho: M \rightarrow S^1$ defined by projection onto the
second coordinate with {\it monodromy}  $(S,\phi)$; and 
\item  a one-dimensional oriented {\it suspension flow}, or {\it foliation with oriented leaves},
$\sL$ on $M$ whose leaves are the images of the leaves $\R \times \{x\}$ under the cyclic covering map
$\R \times S \rightarrow M$ corresponding to the kernel of the map $ \rho_* : \pi_1(M) \rightarrow \Z$.
\end{enumerate}
Two mapping classes are said to be {\it flow-equivalent}  if they define the same pair $(M,\sL)$.
The induced homomorphism $\rho_* : H_1(M;\Z) \rightarrow \Z$ defines an element $\alpha\in H^1(M;\R)$, called a {\it fibered element}.
 
In \cite{Thurston:norm} Thurston defines a semi-norm $|| \ ||$ on $H^1(M;\R)$ with a convex polygonal unit norm ball,
with compact closure if  $M$ is hyperbolic.
Each cone $V_F$ over an open top-dimensional face $F$ is either {\it fibered} (all primitive integral elements are fibered) or contains  no fibered elements.   Since there is one primitive integral element on the ray through each rational points on the fibered face $F$, it
 follows  that 
the union of rational points on fibered faces of oriented 3-manifolds has a natural surjection onto the set of all mapping classes on oriented surfaces of
finite type.
Explicitly, for  $\alpha$ a fibered element in a fibered cone $V_F$, let $\overline\alpha$ be its projection onto $F$ along the rational ray,
and let $(S_\alpha,\phi_\alpha)$ be its monodromy.
For each fibered 3-manifold $M$ and fibered face $F \subset H^1(M;\R)$
 define
\begin{eqnarray*}
\mathfrak f_F: F(\Q) &\rightarrow& \MCG\\
\overline \alpha &\mapsto& (S_\alpha,\phi_\alpha)
\end{eqnarray*}
taking each $\overline \alpha$ to the monodromy $(S_\alpha,\phi_\alpha)$, where $\alpha \in V_F$ is 
the primitive integral element that is a positive multiple of $\overline \alpha$.

Let $\sC$ be the set of all fibered faces, and let $\mathfrak C$ be the set of all flow-equivalence classes.
Then we have a surjection
$$
\mathfrak f: \sC \rightarrow \mathfrak C
$$
taking fibered faces to corresponding flow-equivalence classes.

Our first result shows that quotient families have natural parameterizations by linear segments on a fibered face.

\begin{introthm}\label{quotient-thm}  Each quotient family $Q$ is contained in some flow equivalence class $\mathfrak F$.  Let
$F$ be a fibered face with $\r(F) = \mathfrak F$.  Then there is an embedding 
$$
\iota : I_{m_0} \hookrightarrow F,
$$
such that 
\begin{enumerate}
\item the image $\iota(I_{m_0})$ is a linear section of $F$ in $H^1(M;\R)$,
\item $\iota$ restricts to a map $I_{m_0}(\Q) \rightarrow F(\Q)$,  and
\item for all $\r \in I_{m_0}(\Q)$, 
$$
(S_{\r},\phi_{\r}) = \mathfrak f_F(\iota(c)).
$$
\end{enumerate}
\end{introthm}

Now consider the set of all pseudo-Anosov mapping classes on all oriented surfaces of finite type (possibly with boundary and/or punctures)
$\sP \subset \MCG$.   By a result of Thurston \cite{Thurston88}
the (interior of the) mapping torus $M$ of $(S,\phi)$ is hyperbolic if and only if $(S,\phi)$ is pseudo-Anosov.  

Thus, Theorem~\ref{quotient-thm} has the following immediate corollary:

\begin{corollary}\label{quotient-cor} A quotient family  $Q$ is contained in $\sP$ if and only if 
 $Q \cap \sP \neq \emptyset$.
\end{corollary}

\subsection{Fibered faces and  bounded normalized dilatations}
Let $M$ be a hyperbolic 3-manifold with fibered face $F \subset H^1(M;\R)$.  Fried \cite{Fried82} showed that the function
$$
\alpha \mapsto \log \lambda(\phi_\alpha)
$$
defined for $\alpha$ a primitive integral element of the cone $V_F = F \cdot \R^+$ extends to a continuous  convex 
function on $V_F$ that is homogeneous of
degree $-1$ and goes to infinity toward the boundary of $V_F$.  The Thurston norm $|| \ ||$ 
on $H^1(M;\R)$ has the property that $|| \alpha|| = |\chi(S_\alpha)|$
for all integral elements $\alpha$ on a fibered cone \cite{Thurston:norm}.
Noting that normalized dilatation is the post-composition of Fried's function with the
exponential function, we have the following.

\begin{theorem} [Fried \cite{Fried82}]\label{Fried-thm}  Given a flow-equivalence class $\mathfrak F \subset \sP$ and fibered face $F$ with $\mathfrak c(F) = \mathfrak F$, the normalized dilatation function $L$ extends uniquely to 
a continuous function
$$
L :  V_F \rightarrow \R
$$
that is constant on rays through the origin and on $F$ it defines a convex function that goes to infinity toward the boundary of $F$.
\end{theorem}

\noindent
By this theorem, it suffices to think of $L$ as a function on $F$ thought of as the quotient of $V_F$ by positive scalar multiplication.

Farb-Leininger-Margalit's  {\it Universal Finiteness Theorem} states
conversely that for any $C > 0$, there is a finite set of fibered 3-manifolds $\Omega_C$ such that
 for any pseudo-Anosov map $(S,\phi)$ with $L(S,\phi) < C$, there is some $M \in \Omega_C$ such that
 $M$ is the mapping torus for $(S^0,\phi^0)$, where  $(S^0,\phi^0)$ is the {\it fully-punctured} mapping class 
 obtained from $(S,\phi)$ by removing the singularities of the $\phi$.  \cite{FLM09}

\subsection{Behavior of normalized dilatations and stability}

Our second result deals with the behavior of the normalized dilatations of a quotient family with pseudo-Anosov elements.
We say $Q = Q(\widetilde S,\widetilde \zeta,\widetilde \eta)$ is a {\it stable} family if there are integers $m_1 >0$ such that for all
for $x \in \Sigma_0$ and $m > m_1$ we have
$$
(\widetilde \zeta \widetilde \eta)^{m+1} (x) = \widetilde \zeta (\widetilde \zeta \widetilde \eta)^m (x),
$$
and $Q$ is {\it bi-stable} if both $Q(\widetilde S,\widetilde \zeta, \widetilde \eta)$ and $Q(\widetilde S, \widetilde \zeta^{-1}, \widetilde \eta^{-1})$
are stable.

Consider the function defined by
\begin{eqnarray*}
\widetilde \phi : \widetilde S &\rightarrow& \widetilde S\\
x& \mapsto& \widetilde \zeta^{r-m_1}(\widetilde \zeta\widetilde \eta)^{m_1}\widetilde \zeta^{-r} (x),
\end{eqnarray*}
where $r$ is any integer such that $\widetilde \zeta^{-r}(x) \in \Sigma_0$.  In Lemma~\ref{welldefinedhomeo-lem} we show
that  if $Q$ is stable, then $\widetilde \phi$ is a continuous open map, and if $Q$ is bi-stable, then $\widetilde \phi$ is a homeomorphism.

 \begin{introthm}\label{stability-thm}  If $Q$ is bi-stable, then we have the the following:
 \begin{enumerate}
 \item the map $\widetilde \phi$ defines a mapping class on $\widetilde S$ that commutes with the action of $\Z$, and hence defines a mapping class
$(S_0,\phi_0)$, where $S_0 = \widetilde S /\widetilde \zeta$ and $\phi_0$ is the mapping
 class on $S$ induced by $\widetilde \phi$;
 \item the  map $\iota : I_{m_0} \rightarrow F$ extends to the half open interval $[0,\frac{1}{m_0})$
 and the monodromy at $0$ equals $(S_0,\phi_0)$; and
 \item \label{convergence-item}  the value of the normalized stretch-factor  $L(S_{\r},\phi_{\r})$ converges to a value $L_0$
 with 
 $$
 1 < L_0 < \infty
 $$
as $\r$ approaches $0$.
 \end{enumerate}
 If $Q$ is not stable, then the map $\iota(t)$ converges to a point on the boundary of $F$ as $t$ approaches $0$.
 \end{introthm}
 
 \noindent
 By Fried's theorem,  we have the following immediate corollary.
 
 \begin{corollary} If $Q$ is bi-stable then, 
 $$
 \lim_{c \rightarrow 0} L(S_{\r},\phi_{\r}) = L(S_0,\phi_0);
 $$
and if $Q$ is not stable, then $\lim_{c \rightarrow 0}\iota(c)$ lies on the boundary of $F$ and
 $$
 \lim_{c \rightarrow 0} L(S_{\r},\phi_{\r}) = \infty.
 $$
 \end{corollary}

\begin{remark} {\em It is not known to the author whether stability implies bi-stability for $Q$.  However, bi-stability is a necessary condition for 
$\widetilde \phi$ to be invertible.}
\end{remark}

\begin{remark} {\em Up to now explicit examples and partial generalizations of Penner wheels have been studied without putting them in the context of fibered faces (see   \cite{Bauer:thesis} \cite{Tsai08} \cite{Valdivia:gn}).  One benefit of seeing quotient families as elements of  a single fibered face
is the possibility of getting explicit defining equations for the geometric and homological stretch-factors via the  Teichm\"uller and Alexander polynomials.
We carry out some calculations in Section~\ref{example-sec}.
}\end{remark}

\subsection{Idea of proofs}
In \cite{McMullen:Poly}, McMullen makes the key observation that 
given a fibered hyperbolic 3-manifold $M$ with monodromy $\phi : S \rightarrow S$
and pseudo-Anosov flow $\sL$, one can study the transverse measures on $\sL$ defined by points on  
the associated fibered face by lifting $\widetilde \sL$ to the maximal abelian covering $\widetilde M^{\mbox {ab}}$ of $M$.

Topologically one
can think of ${\widetilde M}^{\mbox{ab}}$ as a product 
$$
{\widetilde M}^{\mbox{ab}} = \R \times \widetilde S
$$
for a surface $\widetilde S$ of infinite type, where
the topologically trivial foliation $\widetilde \sL$ with leaves 
$$
\{\R \times \{x\} \ : \ x \in\widetilde S\}
$$
is a lift of $\widetilde \sL$.
The subtle geometric information  associated to a point on the fibered cone, such as 
invariant transverse measure on $\sL$ and expansion factor of the monodromy action, can be 
translated to information about the action of the covering automorphism group $H^{\mbox{ab}}$ on the module of transversals of 
$\widetilde \sL$.

In this paper, we make use of this idea, but in reverse.  To build the desired 3-manifold $M$, a foliation $\sL$ and 
a fibered face $F$ from a template $(\widetilde S, \widetilde \zeta, \widetilde \eta)$ we begin with the 3-manifold of infinite type: $\widetilde M' =
\R \times \widetilde S$, a fixed point free, properly discontinuous action of a rank-2 free abelian group $H' = \langle T', Z'\rangle$ on $\widetilde M'$
that respects the product structure
and a foliation $\widetilde \sL'$ preserved by $H'$.
We know that topologically $\widetilde \sM'$  would have to be homeomorphic to an abelian covering of $M$, 
but the group action $H'$ and the foliation $\widetilde \sL'$ will typically need to be adjusted.
To do this we use cutting and pasting on $\widetilde M'$
to create a new (though homeomorphic) $\widetilde M$, and an adjusted foliation $\widetilde \sL$ and covering group $H$.  Then $M = \widetilde M/H$ and
the quotient $\R^H$ of $H_1(M;\R)$ defines a 2-dimensional dual subspace  $W \subset H^1(M;\R)$.  To prove Theorem~\ref{quotient-thm} we define 
an inclusion
$$
\iota: I_{m_0}\hookrightarrow F \cap W
$$
 where $F$ is a fibered face of $M$, so that each mapping class in $Q$  parametrized by a rational
point of $I_{m_0}$ is the monodromy parameterized by its image in $F$ under $\iota$.

By fibered face theory and Theorem~\ref{quotient-thm}
there are two possible behaviors for the normalized dilatations of $(S_{\r},\phi_{\r})$ as $\r \rightarrow 0$ depending on whether $\iota(c)$ converges
to the boundary of $F$ (implying unboundedness) or to an interior point of $f$ as $\r$ approached $0$ (implying convergence).  
In Section~\ref{stable-sec},  we prove Theorem~\ref{stability-thm} by showing that $Q$ is stable if and only if it is possible to find a quotient mapping
class corresponding to the lower limit point of $I_{m_0}$ such that $\iota$ extends.  
In Section~\ref{example-sec}, we illustrate how the results can be applied to give explicit computations of invariants for a given
quotient family, namely two-variable specializations of the Alexander polynomial of $M$ and the Teichm\"uller polynomial for
$F$.

\subsection{Acknowledgements.} Work on this paper began during a semester long visit spent at Tokyo Institute of Technology in 2012.
The author is grateful for the stimulating discussions and support of the department members during this period.  
Since then, this paper has gone through several revisions. We thank
B. Farb, S. Fenley,  C. McMullen, H. Sun,  and D. Valdivia for their helpful comments along the way, 
and the anonymous referee whose careful reading and valuable comments and
corrections have led to a substantially improved exposition.

\section{Construction of a one-dimensional flow-equivalence class}
 
We begin this section by describing our construction in Section~\ref{construction-sec} starting with 
a template $(\widetilde S,\widetilde \zeta,\widetilde \eta)$ and using cutting and pasting on $\R \times \widetilde S$ 
to build an $H =\Z \times \Z$-covering $\widetilde M$ of our desired 3-manifold $M$, and a one-dimensional foliation $\widetilde \sL$ on $\widetilde M$.  
Using this we prove Theorem~\ref{quotient-thm}  in Section~\ref{quotient-thm-sec} and Theorem~\ref{stability-thm} in 
Section~\ref{stable-sec}.

\subsection{Building a manifold with a flow using coverings}\label{construction-sec}
Our construction has three parts.  We first build 
a topological model $\widetilde M'$ for the abelian covering of $M$, and a projection
$\h' : \widetilde M' \rightarrow \R \times \R$ (Section~\ref{setup-sec}).The map $\h'$ allows us to specify a cutting locus whose 
connected components are
preimages under $\h'$ of connected line segments in $\R \times \R$ (Section~\ref{cutting-sec}).  The cutting locus
is re-pasted together using mappings defined by $\widetilde \eta$ (Section~\ref{pasting-sec}).   
This results in a new 3-manifold $\widetilde M$ with a one-dimensional foliation $\widetilde \sL$ and an $H = \Z \times \Z$
 action that preserves the leaves of $\widetilde \sL$  and makes $\widetilde M$ a covering of $M = \widetilde M/H$.  The map 
 $\h'$ defines a function $\h : \widetilde M \rightarrow \R \times \R$ with jump discontinuities at the cutting loci.  This will be usefuls
 for describing and 
 visualiziung the cross-sections of $\widetilde \sL$, 
 corresponding to rational points in $I_{m_0}$, 
 all of which are homeomorphic to $\widetilde S$ but embedded
 differently in $\widetilde M$.

\subsubsection{Set up} \label{setup-sec}
Let $\Sigma_0$ be a fundamental domain on $\widetilde S$ for the map $\widetilde \zeta$, with the following properties: 
\begin{figure}[htbp] 
   \centering
   \includegraphics[width=2in]{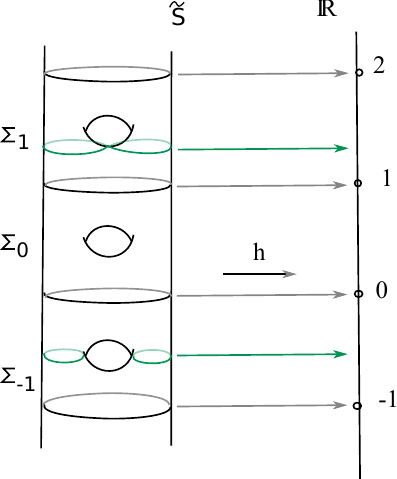} 
   \caption{This is an example of a height function $h$ on $\widetilde S$ that commutes with the action of $\widetilde \zeta$. In this example, 
   the fundamental domain $\Sigma_0$ and all
   its images under powers of $\widetilde\zeta$ have genus one and two boundary components.}
   \label{infinite-fig}
\end{figure}
\begin{enumerate}[(i)]
\item  $\Sigma_0$ is a connected closed submanifold of $\widetilde S$ (with boundary); 
\item  $\widetilde \zeta(\Sigma_0) \cap \Sigma_0$ is a finite disjoint union of closed arcs on  the boundary $\partial \Sigma_0$ of $\Sigma_0$; 
\item $\widetilde \zeta^2(\Sigma_0) \cap \Sigma_0 = \emptyset$; and
\item $\widetilde S = \bigcup_{i \in \Z} \zeta^i(\Sigma_0)$.
\end{enumerate}
Let $\widetilde M' = \R \times \widetilde S$,  let $\widetilde \sL'$ be the trivial 1-dimensional foliation defined on $\widetilde M'$ with 
oriented leaves 
$$
\{\R \times \{y\} \ | \ y \in \widetilde S\}.
$$

The manifold $\widetilde M'$ comes with a natural $H' = \Z \times \Z$ action generated by the
orientation preserving homeomorphisms $T'$ and $Z'$ defined by
\begin{eqnarray*}
T' : \widetilde M' &\rightarrow& \widetilde M'\\
(t,x) &\mapsto& (t-1,x)
\end{eqnarray*}
and
\begin{eqnarray*}
Z' : \widetilde M' &\rightarrow& \widetilde M'\\
(t,x) &\mapsto& (t,\widetilde \zeta(x)).
\end{eqnarray*}
Note that $T'$ and $Z'$ commute, act freely and properly discontinuously on $\widetilde M'$, and preserve
 the leaf structure of $\widetilde \sL'$.
 
We now choose a continuous projection $\h' : \widetilde M' \rightarrow \R \times \R$ so
that powers of $T'$ (and $Z'$) correspond to integer shifts in the first (and second) coordinate.
Let $h$ be a continuous surjective function
$h : \widetilde S \rightarrow \R$  (see Figure~\ref{infinite-fig} for an illustration):
\begin{enumerate}
\item each fiber of $h$ is an immersed union of simple closed curves (rel. punctures) on $\widetilde S$
that split $\widetilde S$ into exactly two pieces; 
\item $\Sigma_0 \cap \widetilde \zeta^{-1}(\Sigma_0) = h^{-1}(0)$; and 
\item $h(\widetilde \zeta^k(x)) = h(x) + k$
for all $x \in \widetilde S$ and $k \in \Z$.
\end{enumerate}
Let $\h' : \widetilde M' \rightarrow \R \times \R$ be defined by $\h' =  \mbox{id} \times h$.

\subsubsection{Cutting}\label{cutting-sec}
We construct a cutting locus $\mathcal G'$ on $\widetilde M'$ by first defining 
a locus on $\R \times \R$, and taking
the preimage by the map $\h'$. 
Let
 $\Gamma_{0,0} \subset \R \times \R$ be the straight line segment connecting $(0,0)$ to $(\frac{1}{2}, m_0)$,     
and, for $(a,b) \in \R\times \R$, let $\Gamma_{a,b} = (a,b) + \Gamma_{0,0}$ be the parallel translate of $\Gamma_{0,0}$ by $(a,b)$ 
(see Figure~\ref{cutsurface-fig}).
Let $X'_{a,b} = {\h'}^{-1}(\Gamma_{a,b})$.

\begin{figure}[htbp] 
   \centering
   \includegraphics[width=2in]{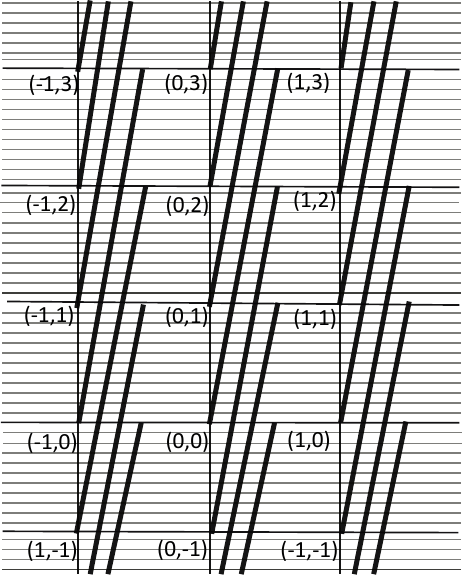} 
   \caption{The image of the cutting loci $\Gamma_{a,b}$ in $\R \times \R$. Horizontal lines indicate the flow $\h'(\widetilde\sL')$.}
   \label{cutsurface-fig}
\end{figure} 

Then we have the following:
\begin{enumerate}
\item the projection $\sigma' : \widetilde M' \rightarrow \widetilde S$ to the second coordinate defines identifications
$$
\sigma'_{(a,b)} : X'_{a,b} \overset\sim\longrightarrow  \widetilde \zeta^{b} \Sigma_0 \cap \cdots \cap \widetilde \zeta^{m_0+ b} \Sigma_0
$$
for each $(a,b) \in \Z \times \Z$;
\item the maps $T'$ and $Z'$ satisfy
$$
\h' (T' (x,y)) = \h'(x,y) + (-1,0)  \quad \mbox{and}\quad \h' (Z' (x,y)) = \h'(x,y) + (0,1) ;
$$
\item $X'_{a,b} \cap X_{a',b'} = \emptyset$  for  $(a,b) \neq (a',b')$; and
\item $T'(X_{a,b})= X'_{a -1,b}$ and $Z'(X'_{a,b}) = X'_{a,b+1}$.
\end{enumerate}

Cut  $\widetilde M'$ along 
$$
\mathcal G' = \bigcup_{(a,b) \in \Z \times \Z} X'_{a,b}, 
$$
i.e., remove the locus $\mathcal G'$ from $\widetilde M'$ replacing each $X'_{a,b}$ with two copies $X_{a,b}^{\mbox{left}}$ and $X_{a,b}^{\mbox{right}}$ of
$X'_{a,b}$ intersecting only at the preimages of the endpoints of $\Gamma_{a,b}$ under ${\h'}^{-1}$, 
and let 
$\widetilde M^{\mbox{cut}}$ be the result. 
Let 
$$
\sG^{\mbox{cut}} = \bigcup_{(a,b) \in \Z \times \Z} X_{a,b}^{\mbox{left}} \cup X_{a,b}^{\mbox{right}}.
$$
Let $q' : \widetilde M^{\mbox{cut}} \rightarrow \widetilde M'$ be the quotient map identifying
both $X_{a,b}^{\mbox{left}}$ and $X_{a,b}^{\mbox{right}}$ with $X'_{a,b}$.

Let $T^{\mbox{cut}}, Z^{\mbox{cut}} : \widetilde M^{\mbox{cut}} \rightarrow \widetilde M^{\mbox{cut}}$
be the lifts of the maps $T'$ and $Z'$ so that
$$
T^{\mbox{cut}}(X_{a,b}^{\mbox{right}}) = X_{a-1,b}^{\mbox{right}}, \quad T^{\mbox{cut}}(X_{a,b}^{\mbox{left}}) = X_{a-1,b}^{\mbox{left}}, 
$$
and
$$
Z^{\mbox{cut}}(X_{a,b}^{\mbox{right}}) = X_{a,b+1}^{\mbox{right}}, \quad Z^{\mbox{cut}}(X_{a,b}^{\mbox{left}}) = X_{a,b+1}^{\mbox{left}}, 
$$
Figure~\ref{gluing-fig} gives a local illustration of the associated cuts along $\Gamma_{a,b}$ in $\R \times \R$, replacing each $\Gamma_{a,b}$
with a left and right copy attached along their boundaries.
\begin{figure}[htbp] 
   \centering
   \includegraphics[width=2.25in]{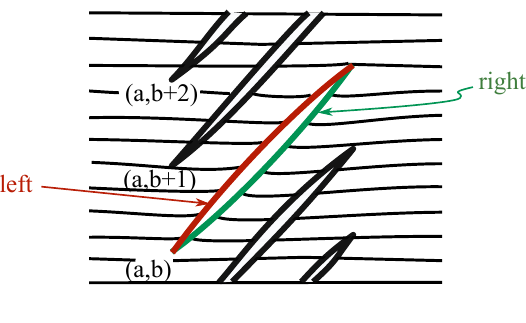} 
   \caption{A local illustration of the associated cuts along $\Gamma_{a,b}$ in $\R \times \R$.}
   \label{gluing-fig}
\end{figure}
Let $\sL^{\mbox{cut}}$ be the lift of $\sL'$ to $\widetilde M^{\mbox{cut}}$.  Then $\widetilde\sL^{\mbox{cut}}$ is a foliation of $\widetilde M^{\mbox{cut}}$ by
intervals whose boundaries lie on the left or right cut loci.

By the definitions, we have the following.

\begin{lemma} \label{cont-lem} The maps $T^{\mbox{cut}}$ and $Z^{\mbox{cut}}$ are commuting self-homeomorphisms of $\widetilde M^{\mbox{cut}}$
and generate a $\Z \times \Z$ action on $\widetilde M^{\mbox{cut}}$ that is properly discontinuous and free, and preserves the leaf structure of
$\widetilde\sL^{\mbox{cut}}$.
\end{lemma}

\subsubsection{Pasting}\label{pasting-sec}

We are now ready to define the new manifold $\widetilde M$ and foliation $\widetilde \sL$.   
For each $(a,b) \in \Z \times \Z$ we define an identification $X_{a,b}^{\mbox{left}}$ to $X_{a,b}^{\mbox{right}}$ as follows.
Let $\mbox{id}_{\mbox{\small r}} : X'_{a,b} \rightarrow X_{a,b}^{\mbox{right}}$ be the identification of $X'_{a,b}$ with $X_{a,b}^{\mbox{right}}$,
and $\mbox{id}_{\ell}$ be the identification of $X'_{a,b}$ with $X_{a,b}^{\mbox{left}}$.
Let 
$$
p_{a,b} : X_{a,b}^{\mbox{left}} \rightarrow X_{a,b}^{\mbox{right}}
$$
be the {\it pasting map} defined by the composition
$$
\xymatrix{
 X_{a,b}^{\mbox{left}}\ar[r]&X'_{a,b} \ar[r]^{\sigma'} \ar[l]_-{\mbox{id}_{\ell}}  &\sigma'(X_{a,b}) \ar[r]^-{\widetilde \zeta^b\widetilde \eta\widetilde \zeta^{-b}}
 &\sigma'(X_{a,b})  \ar[r]^-{(\sigma')^{-1}} &X'_{a,b}\ar[r]^-{\mbox{id}_{\mbox{\small r}}}&X_{a,b}^{\mbox{right}}\ar[l]
 }
$$
\begin{lemma} \label{homeo-lem}
For each $(a,b) \in \Z$, we have
 $$
 T^{\mbox{cut}} \circ p_{a,b} = p_{a-1,b} \circ T^{\mbox{cut}} \qquad  Z^{\mbox{cut}} \circ p_{a,b} = p_{a,b+1} \circ Z^{\mbox{cut}}.
 $$
 \end{lemma}
 
 Let $\widetilde M$ be obtained from $\widetilde M^{\mbox{cut}}$ by pasting each $X_{a,b}^{\mbox{left}}$ to $X_{a,b}^{\mbox{right}}$ by
 the map $p_{a,b}$, that is,  $x \in X_{a,b}^{\mbox{left}}$ is identified with $y \in X_{a,b}^{\mbox{right}}$ if
 $$
y =  p_{a,b}( x).
$$
Let 
 $$
 q : \widetilde M^{\mbox{cut}} \rightarrow \widetilde M
 $$
  be the quotient map,   
  $$
  X_{a,b} = q(X_{a,b}^{\mbox{left}}) (= q(X_{a,b}^{\mbox{right}})),
  $$
and  $\sG = q(\sG^{\mbox{cut}})$. Let 
  $$
  \tau : \widetilde M \rightarrow \widetilde M^{\mbox{cut}}
  $$
  be the (non-continuous) lifting map that is the inverse map of $q$ on $\widetilde M \setminus \sG$ and the inverse of the
  restriction $q : X_{a,b}^{\mbox{left}} \rightarrow q(X_{a,b}^{\mbox{left}})$ on $X_{a,b}$.

By Lemma~\ref{homeo-lem},  $T^{\mbox{cut}}$ and $Z^{\mbox{cut}}$ define homeomorphisms $T$ and $Z$ on $\widetilde M$ with the property that
  $$
 q \circ T^{\mbox{cut}} = T \circ q, \qquad q \circ Z^{\mbox{cut}} = Z \circ q.
 $$

Let $\widetilde \sL$ be the foliation on $\widetilde M$ defined by $\widetilde \sL^{\mbox{cut}}$ as follows.  
The inclusion
$$
\widetilde M' \setminus \sG' \subset \widetilde M^{\mbox{cut}}
$$
defines an identification
$$
\widetilde M' \setminus \sG'   = \widetilde M \setminus \sG.
$$
Thus $\sL'$ and $\widetilde \sL^{{\mbox {cut}}}$ define foliations on $ \widetilde M' \setminus \sG'$, and we have left to consider
only how the leaves of $\widetilde \sL^{\mbox{cut}}$ meet near a point on $q(X_{a,b}^{\mbox{left}})$.
If we think of the leaves of $\widetilde \sL^{\mbox{cut}}$ as being oriented left to right, then they meet each $X_{a,b}^{\mbox{left}}$ 
only at endpoints and meet each $X_{a,b}^{\mbox{right}}$ only at initial points.  
Furthermore, these intersections are locally
trivial products (as illustrated schematically in Figure~\ref{pasting-fig}).  
\begin{figure}[htbp] 
   \centering
   \includegraphics[width=1in]{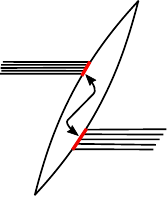} 
   \caption{A schematic picture (in one-dimensional lower) of how leaves of the foliation $\widetilde \sL^{\mbox{cut}}$ are pasted to form the leaves
   of $\widetilde \sL$..}
   \label{pasting-fig}
\end{figure}
In $\widetilde M$, neighborhoods of these endpoints and initial points are
identified under the map $q$ according to the pasting map $p_{a,b}$.  Since $p_{a,b}$ is a homeomorphism 
it follows that the resulting collection of leaves is a foliation. We have shown the following.

\begin{lemma} The leaf structure defined by $\widetilde \sL^{\mbox{cut}}$  on $\widetilde M^{\mbox{cut}}$ descends to a leaf structure for $\widetilde \sL$
on  $\widetilde M$.  
\end{lemma}

Since the $X_{a,b}$ are disjoint  we have the following.

\begin{lemma} \label{Haction-lem} The homeomorphisms $T$ and $Z$ commute and generate a free abelian group $H$ of rank $2$ acting freely
 and properly discontinuously on $\widetilde M$, and permuting the leaves of $\widetilde \sL$.
 \end{lemma}

\begin{remark}  {\em Following \cite{McMullen:Poly} we choose the notation $T$ to correspond to translation in $\R \times \R$ by
$(-1,0)$  in order to be compatible with the typical notation used for mapping tori:
\begin{eqnarray*}
M &=&  [0,1] \times S/_{\mbox{\small $(1,x) \simeq (0,\phi(x))$}}\\
&=&  \R \times S/_{\mbox{\small $(t,x) \simeq (t-1,\phi(x))$}}.
\end{eqnarray*}
 }
\end{remark}

\begin{remark} \label{leaves-rem}{\em Unlike the situation for $\widetilde \sL'$ in $\widetilde M'$ the corresponding
projection $\widetilde M \rightarrow \R$ 
need not be 
surjective on all leaves of $\widetilde \sL$ in $\widetilde M$.}\end{remark}

Let $\sigma : \widetilde M \rightarrow \widetilde S$ be the composition
$$
\xymatrix{
\widetilde M \ar[r]^-\tau &\widetilde M^{\mbox{cut}} \ar[r]^-{q'}& \widetilde M' \ar[r]^-{\sigma'} &\widetilde S.
}
$$
Let $\h : \widetilde M \rightarrow \R \times \R$  be the composition 
$$
\xymatrix{
\widetilde M \ar[r]^-\tau &\widetilde M^{\mbox{cut}} \ar[r]^-{q'}& \widetilde M' \ar[r]^-{\h'} &\R \times \R.
}
$$
Then both $\sigma$ and $\h$ have jump discontinuities on $\sG$,  but their restrictions to $\sG$ are continuous.
  In the next section we define fibrations of $\widetilde M$
over $\R$ whose fibers are homeomorphic to $\widetilde S$ by the restriction of $\sigma$, and so that each component
of $\sG$ is contained in a fiber.

\subsection{Proof of Theorem~\ref{quotient-thm}}\label{quotient-thm-sec}
Let $M = \widetilde M/H$ and let $\sL$ be the foliation defined by taking the image of the leaves of $\widetilde M$ in $M$.
 We will show that $M$ has a fibered face $F$ corresponding to cross-sections of $\sL$, and there is a linear embedding
$$
I_{m_0} \hookrightarrow F
$$
so that $I_{m_0}(\Q)$ maps to the rational points on the image of $I_{m_0}$, and for each $\r \in I_{m_0}(\Q)$,
the image of $\r$ in $F$ has monodromy equal to $(S_{\r},\phi_{\r})$.

Since $\widetilde M \rightarrow M$ is a regular abelian covering with automorphism 
group $H$, it is an intermediate covering of the maximal abelian covering of $M$,
and hence there is a corresponding epimorphism $H_1(M;\Z) \rightarrow H$.

Identifying $H^1(M;\R)$ with homomorphisms of $H_1(M;\Z)$ to $\R$, we obtain
an inclusion:
$$
\Hom(H;\R) \rightarrow H^1(M;\R).
$$
Let $W$ be the image of this map, and let $z,u \in W$ be the basis elements
 dual to  $Z$ and $T$. Using the notation $(r,s)$ for the linear combination $ru + sz$, 
so that 
$$
(r,s) \cdot (T^aZ^b) = ra + sb,
$$
let $V_Q \subset W$ be the cone defined by
$$
V_Q =\{(-r,-s) \in W \ | \ 0 < sm_0 < r\}.
$$

We prove Theorem~\ref{quotient-thm} by showing the following.

\begin{proposition}\label{monodromy-prop} There is a
fibered face $F \subset H^1(M;\R)$ and a continuous map
$$
\iota: I_{m_0} \rightarrow F \cap W
$$
with the property that for $\r \in I_{m_0}(\Q)$
$$
(S_{\r},\phi_{\r})=\mathfrak f(\iota(c)).
$$
\end{proposition}

\noindent
To prove Proposition~\ref{monodromy-prop} we show that for each $\r \in I_{m_0}(\Q)$, there is a 
corresponding element $\alpha_\r \in V_\Q$, and a
fibration $\widetilde \rho_\r : \widetilde M \rightarrow \R$ that lifts a fibration
of $\rho_{\r} : M \rightarrow S^1$ with monodromy equal to $(S_\r,\phi_\r)$.  We then show that the 
map $\r \mapsto \alpha_\r$ extends to a continuous one-to-one map from $I_{m_0}$ to $V_Q \cap F$ for
a fibered face $F$.
\medskip

\noindent 
{\bf Step 1. Building fibrations of $\widetilde M$ to $\R$ transverse to the foliation $\widetilde \sL$.}

\noindent
Take any $\r = \frac{k}{n} \in I_{m_0}(\Q)$, and let
$$
\alpha_{\r} = (-k,-n).
$$
Then we have $\alpha_{\r} \in V_\Q$, and, seeing $\alpha_{\r}$  as a homomorphism, we have
\begin{eqnarray*}
\alpha_{\r} : H &\rightarrow& \Z\\
T^aZ^b &\mapsto& - an - bk.
\end{eqnarray*}
Let 
$$
Z_{\r} = T^{-k}Z^n.
$$
Then the kernel $K_{\r}$ of $\alpha_{\r}$ is freely generated by $Z_{\r}$.
Let $w,\overline k_n$ be the solutions to 
$$
wn + \overline k_nk =  1,
$$
 and let
$$
T_{\r}= T^{w}Z^{\overline k_n}.
$$
Then $\alpha_{\r}(T_{\r}) = -1$.

We will define a fibration $\widetilde \rho_{\r} : \widetilde M \rightarrow \R$ so that the fibers of $\widetilde \rho_{\r}$
are homeomorphic by the restriction of $\sigma$ to $\widetilde S$, they are preserved by the elements of $K_{\r}$, and they are permuted by the actions of $Z$ and $T$.
Furthermore, each leaf of $\widetilde \sL$ intersects each fiber of $\widetilde \rho_{\r}$ exactly once.

We  begin by first defining a suitable projection $p_{\r} : \R \times \R \rightarrow \R$,
and then pulling back by $\h$ to $\widetilde M$.
\begin{figure}[htbp] 
   \centering
   \includegraphics[width=4in]{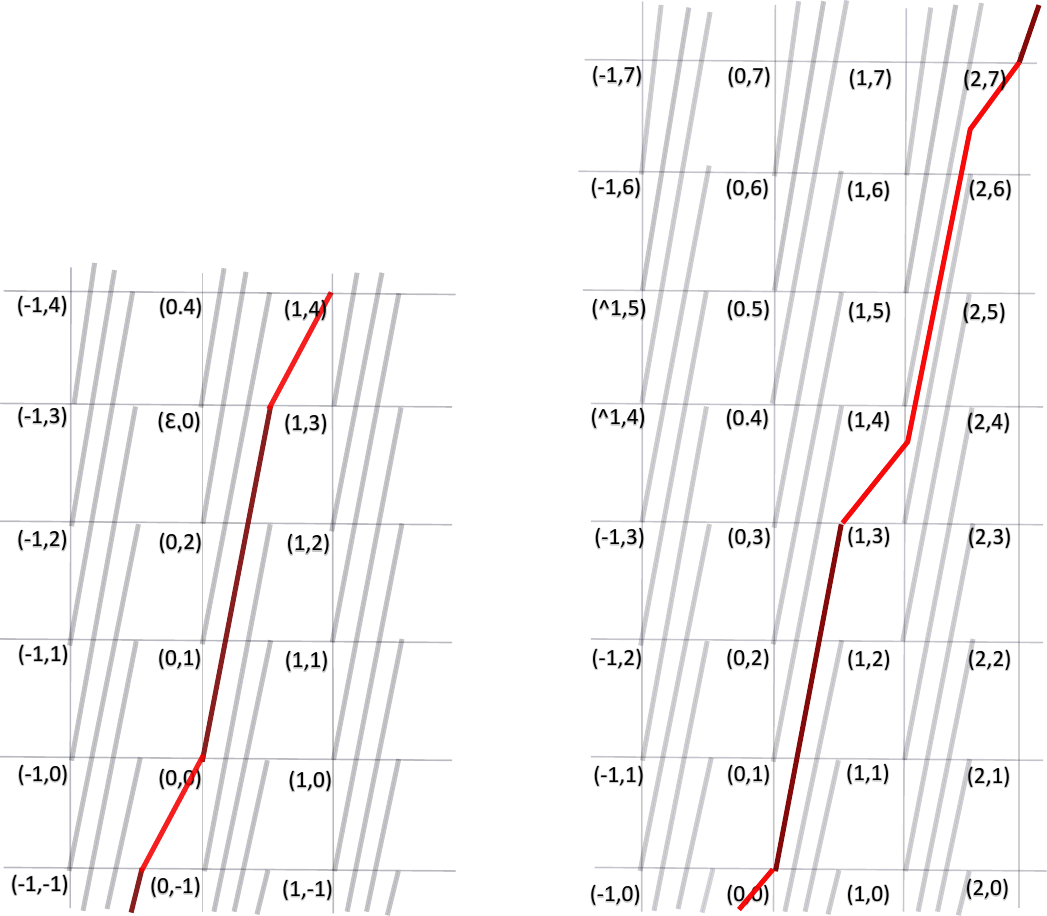} 
   \caption{The loci $\widetilde R_{\r} \subset \R \times \R$ for $\r= \frac{1}{4}$ (left) and $\r = \frac{2}{7}$ (right) when $m_0 = 3$.}
   \label{Gamma-fig}
\end{figure}

\begin{proposition}\label{R-prop} For each  $\r = \frac{k}{n} \in I_{m_0}(\Q)$ there is
a continuous monotone increasing function 
$$
g_{\r} : \R \rightarrow \R,
$$
such that
\begin{enumerate}
\item $g_{\r}(r) = \frac{r}{\r}$ for all $r \in \Z$; 
\item $g_{\r}(x + r) =  g_{\r}(x) + \frac{r}{\r}$ for all $r \in \Z$ and $x \in \R$; and
\item for each $(a,b) \in \Z \times \Z$,  $\Gamma_{a,b}$ is contained in the graph $y = g_{\r}(x) + \frac{r}{k}$ for some $r \in \Z$.
\end{enumerate}
\end{proposition}

\begin{proof}
Let $\Delta_{0,0}$ be the straight line segment on $\R \times \R$ connecting 
$p =(\frac{1}{2},m_0)$ and $q_{\r}=(1,\frac{1}{\r})$.
For $(a,b) \in \R \times \R$ let
$$
\Delta_{a,b} = (a,b) + \Delta_{0,0}.
$$
Let
$$
\widetilde R_{\r} = \bigcup_{r \in \Z} (\Gamma_{r,\frac{r}{\r}} \cup \Delta_{r,\frac{r}{\r}}).
$$
Then $\widetilde R_{\r}$ is the graph of a piecewise-linear, monotone increasing function $g_{\r}$
with the desired properties.
\end{proof}

Let 
$$
\widetilde R_{c,\xi} = \widetilde R_{\r} + (0,-\frac{\xi}{k})
$$
(see Figure~\ref{Gamma-fig}).   Then $\widetilde R_{c,\xi}$ intersects $\Gamma_{a,b}$ for some $(a,b) \in \Z \times \Z$ if and only if $\xi \in \Z$.
Each $\widetilde R_{c,\xi}$ is the graph of a monotone increasing,
piecewise linear, continuous function $g_{c,\xi}$ defined by $g_{\r,\xi} (\r) = g_{\r}(\r) + \xi$.   Thus, the $\widetilde R_{c,\xi}$
partition $\R \times \R$ into a disjoint union, and we have a well-defined continuous function:
$$
p_{\r} : \R \times \R \rightarrow \R
$$
defined by sending each $x \in \widetilde R_{c,\xi}$ to $\xi$.

\begin{lemma}\label{R-lem} For each $\xi \in \R$, the locus $\widetilde  R_{c,\xi}$ has the following properties.
\begin{enumerate}
\item translation by $(1,\frac{n}{k})$, generates an infinite cyclic action on $\widetilde R_{c,\xi}$ with
fundamental domain given by $(0,-\frac{\xi}k) + \Gamma_{0,0} \cup \Delta_{0,0}$;
\item $(0,1) + \widetilde R_{c,\xi} = \widetilde R_{c,\xi -k}$; and
\item 
$(-1,0) + \widetilde R_{c,\xi} = \widetilde R_{c,\xi  - n}$.
\end{enumerate}
\end{lemma}

\begin{proof}
To prove (1) we note that the statement holds for $\widetilde R_{\r}$ by construction.  Since translations by $(0,\xi)$ and by $(1,\frac{n}{k})$ commute
on $\R \times \R$, the statement also holds for $\widetilde R_{c,\xi}$ for all $\xi$.  
Properties (2) and (3) follow from verifying that $(0,1) \in \widetilde R_{c,-k}$ and
$(0,\frac{n}{k}) \in \widetilde R_{c,-n}$.
\end{proof}

We now apply Lemma~\ref{R-lem} to $\widetilde M$.
Let $\widetilde \rho_{\r} = p_{\r} \circ \h$, and let 
$ \widetilde S_{c,\xi} =  \h^{-1}(\widetilde R_{c,\xi}) \subset \widetilde M$.    

\begin{lemma} \label{coveringfibration-lem} The map $\widetilde \rho_{\r}$ is a fibration with fibers $\widetilde S_{c,\xi}$, for all $\xi \in \R$
and for all $\xi \in \R$ we have the following:
\begin{enumerate}
\item the fundamental domain of the action of $K_{\r}$ on $\widetilde S_{c,\xi}$ is the set
$$
\h^{-1}\left (\bigcup_{r = 0}^{k-1}\Gamma_{r,\frac{rn}{k}} \cup \Delta_{r,\frac{rn}{k}} \right ).
$$
\item $K_{\r} = \langle Z_{\r} \rangle$ generates the set-wise stabilizer in $H$ of $\widetilde S_{c,\xi}$; 
\item $Z(\widetilde S_{c,\xi}) = \widetilde S_{c,\xi -k }$ and
$T(\widetilde S_{c,\xi}) = \widetilde S_{c,\xi -n}$; 
\item $Z_{\r}(\widetilde S_{c,\xi}) = \widetilde S_{c,\xi}$, $T_{\r}(\widetilde S_{c,\xi}) = \widetilde S_{c,\xi -1}$; 
\item \label{Tc-item}$\widetilde \rho_{\r} (Z_{\r}(\widetilde S_{c,\xi})) = \widetilde \rho_{\r}(\widetilde S_{c,\xi})$, $\widetilde \rho_{\r}(T_{\r}(\widetilde S_{c,\xi})) = \widetilde \rho_{\r}(\widetilde S_{c,\xi}) -1$; and
\item $\widetilde \rho_{\r}$ restricts to a homeomorphism to $\R$ on each leaf of $\widetilde \sL$.
\end{enumerate}
\end{lemma}

\begin{proof}
The map $\widetilde \rho_{\r}$ is continuous since each of the fibers $\widetilde S_{c,\xi}$ is either disjoint from $\sG$ or contains components of $\sG$.
Since  $\h \circ Z_{\r} \circ \h^{-1}$
is translation by $(k,n) = k (1,\frac{n}{k})$,  Lemma~\ref{R-lem} implies that $Z_{\r}$ and all its powers preserve $\widetilde S_{\r,\xi}$. 
To show that $Z_{\r}$ generates the stabilizer of $\widetilde S_{c,\xi}$ we note that any element of $H$ is conjugate to a translation of
$\R\times \R$.   If $h \in H$ preserves $\widetilde S_{\r,\xi}$ it must be conjugate to 
a translation preserving $\widetilde R_{\r,\xi}$, i.e. translation by $m(1,\frac{n}{k})$ for some integer $m$.  Since $H$
furthermore preserve $\sG$ and $k$ and $n$ are relatively prime we must also have $k$ divides $m$.  Thus, $h$ is a multiple of $Z_{\r}$.
This proves (1) and (2).

Items (3) and (4) follow directly from Lemma~\ref{R-lem}.
To prove (5) we note that
the action of $T_{\r}$ on $\widetilde M$ is conjugate by $\h$ to translation in $\R \times \R$ by
$$
T_{\r} : (a,b) \mapsto (a,b) + (-w,\overline k_n).
$$
Since translation by $(1,\frac{n}{k})$ stabilizes each 
$\widetilde R_{c,\xi}$,  $(-w,\xi+\overline k_n)$ and $(0,(\xi+\overline k_n) + \frac{wn}{k})$ lie
on the same fiber of $p_{\r}$.  Using the definition of $w$ and $\overline k_n$, we have
$$
(\xi + \overline k_n) + \frac{wn}{k} = \xi + \frac{\overline k_n k + wn}{k} = \xi + \frac{1}{k}
$$
proving (\ref{Tc-item}). 
To prove (6) it suffices to show that $\widetilde \rho_{\r}$ is 1-1 and onto on each leaf $\ell$ of $\widetilde \sL$,
or equivalently that $\ell$ intersects $\widetilde S_{c,\xi}$ exactly once for each $\xi \in \R$.
This can be seen by verifying that
each segment of $\widetilde \sL'$ intersects each $(q' \circ \tau)(\widetilde S_{c,\xi})$ in exactly one point.
\end{proof}

By the above we have a setwise bijection
\begin{eqnarray*}
\mathfrak j_{\r} : \widetilde M &\rightarrow& \R \times \widetilde S\\
s &\mapsto& (\widetilde \rho_{\r}(s), \sigma(s))
\end{eqnarray*}
which is a homeomorphism outside the preimage of $\Z \times \widetilde S$.
Outside this locus of discontinuity, the definition of the pasting map used to construct $\widetilde M$ gives:
$$
 \mathfrak j_{\r} \circ T_{\r} \circ \mathfrak j_{\r}^{-1}(s) = (\widetilde \rho_{\r}(s) - 1, \widetilde \phi_{\r}^{-1}(\sigma(s))).
 $$
 Thus, we can also think of $\mathfrak j_{\r}$ as a homeomorphism after cutting $\R \times \widetilde S$ along $\Z \times \widetilde S$ and
pasting by the map $\widetilde \phi_{\r}$.

Define a forward flow along $\widetilde \sL$ by
\begin{eqnarray*}
f_{\r} : \R \times \widetilde M &\rightarrow& \widetilde M\\
(t,s) &\mapsto& \mathfrak j_{\r}^{-1}(\widetilde \rho_{\r}(s) + t, \phi_{\r}^{r}(\sigma(s)))
\end{eqnarray*}
where 
$$
r = \left \{
\begin{array}{ll}
\lfloor t \rfloor & \quad \mbox{if $t \ge 0$}\\
\lceil t \rceil & \quad \mbox {if $t < 0$}
\end{array}
\right .
$$
Another way to think of $f_{\r}$ is that for each $(t,s) \in \R \times \widetilde M$, $f_{\r}(t,s)$ is the point on $\widetilde S_{\r,\xi+t}$
that lies on the leaf $\ell$, where $\ell$ is the leaf containing the point $s$ and $\xi = \widetilde \rho_{\r}(s)$.
\medskip
     
\noindent    
{\bf Step 2. Comparing $T_{\r}$ with the forward flow on $\widetilde M$.  }

\begin{lemma} \label{flowforward-lem} The flow map $f_{\r}$ satisfies
$$
\sigma(f_{\r}(t+1,s)) = \widetilde \phi_{\r}(\sigma(f_{\r}(t,s))).
$$
\end{lemma}

\begin{proof}  This follows from comparing the definition of the pasting map in Section~\ref{pasting-sec}
with the definition of $\widetilde \phi_{\r}$ given in Section~\ref{quot-sec}.\end{proof}

\begin{corollary}\label{monodromy-cor}
The  map $T_{\r}$ satisfies
$$
\sigma(s) = \widetilde \phi_{\r} (\sigma(T_{\r}(s)))
$$
for $s \in \widetilde M$.
\end{corollary}

\begin{proof}  By Lemma~\ref{flowforward-lem} we have
$$
\widetilde \phi_{\r}(\sigma(T_{\r}(s))) = \widetilde \phi_{\r} ( \sigma(f_{\r}(-1,s))) =\sigma(f_{\r}(0,s)) = \sigma(s).
$$
\end{proof}

\noindent
{\bf Step 3. Descending to the quotient fibration.}
Since  $Z_{\r}$ and $T_{\r}$
commute, it follows that $T_{\r}$ defines a covering automorphism $\overline T_{\r}$ on the intermediate covering
$$
M_{\r} = \widetilde M/\langle Z_{\r} \rangle \rightarrow M  
$$
over $M$.  
 The projection $\widetilde \rho_{\r}$ also descends to a
fibration
$$
\overline \rho_{\r} : M_{\r} \rightarrow \R
$$
whose fibers are homeomorphic to $S_{\r} = \widetilde S/\widetilde \zeta^n$.   For $s \in \widetilde M$, let $\overline s$ be the image of $s$ in $M_{\r}$, and
let $\overline{\sigma(s)}$ be the image of $\sigma(s)$ in $S_{\r}$.

Since $Z_{\r}$ is conjugate by $\j_{\r}$ to  $\mbox{\small id} \times \zeta^n$ on $\R \times \widetilde S$, 
the map $\j_{\r}$ descends to an
identification
\begin{eqnarray*}
\overline j_{\r} : M_{\r} &\rightarrow& \R \times S_{\r}\\
\overline s &\mapsto& (\widetilde \rho_\r(s),\sigma_\r(s)).
\end{eqnarray*}

Then the element $\alpha_{\r}$ defines a commutative diagram
$$
\xymatrix{
\widetilde M\ar[d]_{/_{K_{\r}}}\ar[r]^-{\j_{\r}} &\R \times \widetilde S \ar[d]^{/_{\mbox{\tiny id} \times \widetilde \zeta^n}}\\
M_{\r} \ar[d]_{/_{\langle \overline T_{\r} \rangle}}\ar[r]^-{\overline \j_{\r}} &\R \times S_{\r} \ar[d]^{/_{\Z \times S_{\r}}}\\
M \ar[r] &S^1.
}
$$
where induced map $\overline T_{\r} : M_{\r} \rightarrow M_{\r}$ is defined by
\begin{eqnarray*}
\overline j_{\r} \circ \overline T_{\r} \circ j_{\r}^{-1} : \R \times S_{\r} &\rightarrow& \R \times S_{\r}\\
(t,x) &\mapsto& (t-1, \phi_{\r}(x))
\end{eqnarray*}

The bottom horizontal map in the diagram is the fibration $\rho_{\r}$ associated to $\r$, and the fibers are 
homeomorphic to $S_{\r}$ and are cross-sections of the flow $\sL$.  The monodromy is defined by $\overline T_{\r}$,
and equals $(S_{\r},\phi_{\r})$.  
This completes the proof of Theorem~\ref{quotient-thm}.

\subsection{Stable quotient families}\label{stable-sec}  
To prove Theorem~\ref{stability-thm}, we study the cohomology class in $H^1(M;\Z)$ associated to $\alpha_0 = (-1,0)$.
This is the primitive element on the limit of the sequence of rays defined by $(-k,-n)$ as $\frac{k}{n}$ approaches $0$.
The kernel of $\alpha_0$ restricted to $H$ is generated by the map $Z$.   For $r \in \Z$, let  $\widetilde S_{0,r}$ be the surface defined
by the image in $\widetilde M$ of $\{r\} \times \widetilde S \subset \widetilde M^{\mbox{cut}}$.   
 Unlike in the case of $\r \in I_{m_0}(\Q)$, there is no easily defined projection $\widetilde \rho_0$.
 
We will show that if $Q$ is a stable family, then for any $r \in \Z$ 
each leaf of $\widetilde \sL$ passes through  $\widetilde S_{0,r}$ exactly once for all $r \in \Z$, and
if $Q$ is not stable, then for each $r \in \Z$ there is at least one leaf of $\widetilde \sL$ that does not pass through $\widetilde S_{0,r}$.

 \begin{figure}[htbp] 
   \centering
   \includegraphics[height=2.5in]{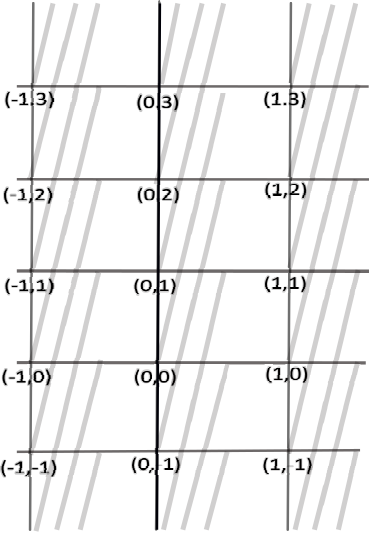} 
   \caption{The surfaces $\widetilde S_{0,r}$ are sent by $\h$ to the vertical lines $\{r\} \times \R$ where $r \in \Z$.}
   \label{Gammalimit-fig}
\end{figure}

\begin{lemma}\label{widetildephi-lem} Let $x \in \widetilde S$, and $r_1 < r_2$ be integers such that $\zeta^{-r_i}(x) \in \Sigma_0$ for $i=1,2$.
Then
we have
$$
 \widetilde \zeta^{-m_1 + r_1}(\widetilde \zeta \widetilde \eta)^{m_1} \widetilde \zeta^{-r_1}(x)
 =  \widetilde \zeta^{-m_1 + r_2}(\widetilde \zeta \widetilde \eta)^{m_1} \widetilde \zeta^{-r_2}(x).
 $$
 \end{lemma}
 \begin{proof}
 By assumption, $\Sigma_0 \cap \widetilde \zeta^2(\Sigma_0)  = \emptyset$, and hence we have $r_2 = r_1 + 1$.
 This means that $\widetilde \zeta^{-r_2} (x) \in \Sigma_0 \cap \widetilde \zeta^{-1}(\Sigma_0)$ and hence
 $$
 \widetilde \eta \widetilde \zeta^{-r_2}(x) = \zeta^{-r_2}(x).
 $$
 By the definition of stability we also have
 $$
 \widetilde \eta (\widetilde \zeta \widetilde \eta)^{m_1} \widetilde \zeta^{-r_2}(x) = (\widetilde \zeta \widetilde \eta)^{m_2} \widetilde \zeta^{-r_1}(x).
 $$
Putting this together gives the equalities:
\begin{eqnarray*}
 \widetilde \zeta^{-m_1 + r_1}(\widetilde \zeta \widetilde \eta)^{m_1} \widetilde \zeta^{-r_1}(x)
 &=& \widetilde \zeta^{-m_1 + r_2 -1}(\widetilde \zeta \widetilde \eta)^{m_1}\widetilde \zeta^{-r_2+1}(x)\\
 &=&\widetilde \zeta^{-m_1 + r_2} \widetilde \zeta^{-1} (\widetilde \zeta \widetilde \eta)^{m_1} \widetilde \zeta 
 \widetilde \zeta^{-r_2}(x)\\
 &=&\widetilde \zeta^{-m_1 + r_2} \widetilde \eta (\widetilde \zeta \widetilde \eta)^{m_1-1}\widetilde \zeta \widetilde \eta
 \widetilde \zeta^{-r_2}(x)\\
  &=&\widetilde \zeta^{-m_1 + r_2} (\widetilde \zeta \widetilde \eta)^{m_1} \widetilde \zeta^{-r_2}(x)\\
\end{eqnarray*}
completing the proof.
\end{proof}

Let $\widetilde \phi$ be defined by
\begin{eqnarray*}
\widetilde \phi : \widetilde S &\rightarrow& \widetilde S\\
x& \mapsto& \widetilde \zeta^{-m_1 + r}(\widetilde \zeta \widetilde \eta)^{m_1} \widetilde \zeta^{-r}(x),
\end{eqnarray*}
where $r$ is any  integer such that $\widetilde \zeta^{-r}(x) \in \Sigma_0$.  
This is well-defined by Lemma~\ref{widetildephi-lem}.

\begin{lemma} \label{zcommute-lem}  The map $\widetilde \phi$ commutes with $\widetilde \zeta$.
\end{lemma}

\begin{proof}  Let $x \in \widetilde \zeta^r(\Sigma_0)$.  Then we have
\begin{eqnarray*}
\widetilde \phi (\widetilde \zeta(x)) &=& \widetilde \zeta^{-m_1+r+1} (\widetilde \zeta\widetilde \eta)^{m_1}\widetilde \zeta^{-r-1}\widetilde \zeta(x)\\
&=&\widetilde \zeta \widetilde \zeta^{-m_1+r} (\widetilde \zeta\widetilde \eta)^{m_1}\widetilde \zeta^{-r}(x)\\
&=& \widetilde \zeta \widetilde \phi(x).
\end{eqnarray*}
\end{proof}

\begin{lemma} \label{welldefinedhomeo-lem} 
If $Q$ is stable then the map $\widetilde \phi$ is a continuous, locally injective, open mapping, and if $Q$ is bi-stable, then $\widetilde \phi$
is a homeomorphism.
\end{lemma}
\begin{proof} 
Since
\begin{eqnarray*}
\widetilde \zeta^{-m_1 + r}(\widetilde \zeta \widetilde \eta)^{m_1} \widetilde \zeta^{-r}(x) &=& \widetilde \zeta^r (\widetilde \zeta^{-m_1}\widetilde \eta \widetilde \zeta^{m_1}) \cdots (\widetilde \zeta^{-1} \widetilde  \eta \zeta) \widetilde \eta \widetilde \zeta^{-r}(x)
\end{eqnarray*}
we can think of $\widetilde \phi$ as the infinite composition of maps
$$
\psi_i = \widetilde \zeta^{-i} \widetilde \eta \widetilde \zeta^i
$$
where $i$ goes from $- \infty$ to $\infty$;
that is, 
$$
\widetilde \phi  = \cdots \circ \psi_{i-1} \circ \psi_i \circ \psi_{i+1} \circ \cdots,
$$
This is well-defined and continuous since for each $x \in \widetilde S$, if we let $r$ be such that $x \in \zeta^r(\Sigma_0)$,
we have 
$$
\psi_i(x) = x
$$
for all $i > -r$, and 
\begin{eqnarray*}
\widetilde \phi(x) &=&  \cdots \psi_{-r-1} \psi_{-r}(x) \\
&=&\psi_{-r-m_1} \cdots \psi_{-r}(x).
\end{eqnarray*}
Since $\psi_i$ are all homeomorphisms, this implies that $\widetilde \phi$ is an open mapping.

If in addition, $Q$ is bi-stable, then a similar argument shows that the following map is well-defined and continuous.
Take $x \in \widetilde S$, and let $r$ be such that $x \in \widetilde \zeta^{r}(\Sigma_{m_0})$.  Define
$$
\widetilde \phi' (x) = \widetilde \zeta^r (\widetilde \eta^{-1}\widetilde \zeta^{-1})^{m_1}\widetilde \zeta^{-r}(x).
$$
Then the stability of $Q(\widetilde S,\widetilde \zeta^{-1},\widetilde \eta^{-1})$ implies that for $x \in \widetilde S$
and $i$ large enough $\psi_i^{-1}(x) = (x)$, and  the composition
$$
\cdots  \circ \psi_{i+1}^{-1} \circ \psi_i^{-1} \circ \psi_{i-1}^{-1} \circ \cdots,
$$
is well-defined and continuous.   This map is the inverse of $\widetilde \phi$.
\end{proof}

\begin{lemma}\label{commutingZ-lem}  The map $\widetilde \phi$ commutes with $\zeta$.  
\end{lemma}

\begin{proof}  Take any $s \in \widetilde S$, and assume that $\widetilde \zeta^{-r}(s) \in \Sigma_0$.  Then
\begin{eqnarray*}
\widetilde \phi(\widetilde \zeta(s)) &=& \widetilde \zeta^{r+1} (\widetilde \zeta\widetilde  \eta)^{m_1}\widetilde \zeta^{-r-1}(\zeta(s))\\
&=& \widetilde \zeta^{r+1} (\widetilde \zeta\widetilde  \eta)^{m_1}\widetilde \zeta^{-r}(s)\\
&=& \widetilde \zeta \widetilde \zeta^{r} (\widetilde \zeta\widetilde  \eta)^{m_1}\widetilde \zeta^{-r}(s)\\
&=& \widetilde \zeta (\widetilde \phi(s)).
\end{eqnarray*}
\end{proof}

\begin{lemma}\label{leaf-lem} The quotient family $Q$ is bi-stable if and only if $a \in \Z$ and any leaf $\ell$ of
$\widetilde \sL$, $\ell$ intersects $\widetilde S_{0,a}$ in a single point $x_a$.  In this case, $x_a$ satisfies
 $$
\sigma(x_a) = \widetilde \phi(\sigma(x_{a+1}))
$$
for all $a \in \Z$.
\end{lemma}

\begin{proof} 
Let  
$$
\widetilde \rho_0: \widetilde M \rightarrow \R
$$
be the composition of $\h$ with projection of $\R \times \R \rightarrow \R$ onto the first coordinate.  
Then $\widetilde \rho_0$ has jump discontinuities at the cut locus $\sG$.  

Assume $Q$ is bi-stable.
Let $\ell$ be any leaf of $\widetilde \sL$.	   Then for each $a \in \Z$, $\ell$ intersects $\widetilde S_{0,a}$ in at most one point
since the restriction of  $\widetilde \rho_0$ to $\ell$ is monotone in small enough neighborhoods of
$$
\bigcup_{a \in \Z} \widetilde S_{0,a}.
$$

For each $a \in \Z$, let $\sG_a = \bigcup_{b \in \Z} X_{a,b}$.

Suppose $t_0$ is a point on the intersection $X_{a,b} \cap \ell$ for some $a,b \in \Z \times \Z$.
 Then there is a well-defined orientation on $\ell$ such that the next time $\ell$
intersects $\sG$ is at $X_{a,b-1}$.  This orientation defines an ordering on the sequence $ t_0, t_1,t_2,\dots$ of
intersections of $\ell$ with $\sG$ that occur on $\ell$ after $t_0$.
Then $t_i \in X_{a,b -i}$ for each $i = 1,2,\dots$, and
\begin{eqnarray*}
\sigma(t_i) &=& \widetilde\zeta^{b-i+1}\widetilde \eta\widetilde  \zeta^{-b + i-1}(\sigma(t_{i-1})) \\
&=& \widetilde\zeta^{b-i+1}\widetilde \eta \widetilde\zeta^{-b+i-1} \widetilde\zeta^{b-i +2} \widetilde\eta \widetilde\zeta^{-b+i-2}(\sigma(t_{i-2})) =
\widetilde \zeta^{b-i}(\widetilde\zeta\widetilde\eta)^2 \widetilde\zeta^{-b+i-2}(\sigma(t_{i-2}))\\
&=& \widetilde \zeta^b (\widetilde \zeta \widetilde \eta)^{b} \widetilde \zeta^{-b}(t_0).
\end{eqnarray*}
If $Q$ is stable, then this sequence must terminate after at most $m_1$ steps.  Since this is true for any starting point $t_0 \in \ell \cap \sG_a$,
it follows that $\ell \cap \sG_a$ is finite and has at most $m_1$ elements.
Furthermore, if $t_0$ is the first time $\ell$ meets $\sG_a$ and $t_m$ is the last, then
$$
\sigma(x_{a+1}) = \widetilde \zeta^{m+1} \widetilde \eta  \widetilde \zeta^{1-m} \sigma(t_m)  = \sigma(t_m) = \widetilde \zeta^r(\widetilde \zeta \widetilde \eta)^r \widetilde \zeta^{-4}(\sigma(t_0)) = \widetilde \phi(\sigma(t_0)) = \widetilde \phi(\sigma(x_a)
$$
where $x_a$ is the intersection of $\ell$ with $\widetilde S_{0,a}$ and $x_{a+ 1}$ is the intersection of $\ell$ with $\widetilde S_{0,a+1}$.

Since all leaves of $\widetilde \sL$ must interesect $\sG$ in at least one point (since $m_0 \ge 1$), we have shown that
for each $\ell \in \widetilde \sL$, $\ell$ intersects $\widetilde S_{0,a}$ and $\widetilde S_{0,a+1}$ for some $a$.  Our argument also
shows by induction that this $\ell$ intersects $\widetilde S_{0,a'}$ for all $a' \ge a$.  

Using the same argument on the backward flow on $\ell$ starting from the point $x_a$, the
stability of $Q(\widetilde S,\widetilde \zeta^{-1},\widetilde \eta^{-1})$
implies that $\ell$ intersects every $\widetilde S_{0,a'}$ for $a'<a$.

Conversely, suppose that for each $\ell$ and each $a \in \Z$, we have $\ell$ intersecting $\widetilde S_a$ in a single point $x_a$.
Again, we consider the ordered intersections of $\ell$ with $\sG_a$.  This must be a finite sequence, since if not, then $\ell$ would never reach
$\widetilde S_{a+1}$.  If $t_0,\dots,t_k$ are intersections, starting with the one on $\ell$ immediately $x_a$, then as before we have
$$
\widetilde \phi(\sigma(x_a)) = \widetilde \phi(\sigma(t_0)) = \sigma(t_k) = \sigma(x_{a+1}).
$$
\end{proof}

\begin{remark} {\em We have shown in the proof of Lemma~\ref{leaf-lem}, that $Q$ is stable if and only if
for all $a \in \Z$, and leaf $\ell \in \widetilde \sL$ passing through a point $x_a \in \widetilde S_a$,  the leaf
$\ell$ passes through $\widetilde S_{a'}$ for all $a' \ge a$.  A priori, there may be other leaves of $\ell$ that
stay in between $S_a$ and $S_{a+1}$.
}
\end{remark}

\noindent
By Lemma~\ref{leaf-lem}, if $Q$ is bi-stable then $\iota$ extends continuously to $0$, and
$\iota(0)$ has monodromy $(S,\phi)$, where $S = \widetilde S/\wzeta$ and $\phi$ is the map induced by $\widetilde \phi$.
Thus, $\alpha_0$ must lie in the interior of the fibered face and
by continuity of $L$, we have 
$$
\lim_{c \rightarrow 0} L(\alpha_{\r}) = L(\alpha_0) = L(S,\phi).
$$
Conversely, if $Q$ is not stable, then 
$\alpha_0$ must lie on the boundary of the fibered cone and the sequence $ L(\alpha_{\r}) $
diverges as $\r$ approaches $0$.
 This completes the proof of
Theorem~\ref{stability-thm}.

\section{Penner example}\label{example-sec}

In this section we illustrate Theorem~\ref{quotient-thm} and Theorem~\ref{stability-thm} using 
Penner's sequence $(R_g,\psi_g)$  (recall Figure~\ref{Pennerexample-fig}).

Let $Q= Q(\widetilde S,  \widetilde \zeta, \widetilde \eta)$ where
$\widetilde S$ is the 
infinite surface drawn in Figure~\ref{original_lifts-fig} as a stack of copies $\Sigma_i$,     $i \in \Z$, of a fundamental domain
$\Sigma$, and $\widetilde \zeta$ sends each $\Sigma_i$ homeomorphically to $\Sigma_{i+1}$, $\Sigma_i \cap \Sigma_{i+1} \subset \partial \Sigma_i$,
$\widetilde \zeta^2(\Sigma_i) \cap \Sigma_i = \emptyset$
and $\widetilde \eta = \delta_{\widetilde c}\delta_{\widetilde b}^{-1} \delta_{\widetilde a}$.

For a mapping class $(S,\phi)$, where $S$ has punctures or boundary components, let $\overline S$ be the {\it closure} of $S$, that is, the
closed surface obtained by filling in the punctures and boundary components, and let  $(\overline S,\overline \phi)$ 
be the induced mapping class.
Then we observe the following.

\begin{proposition} For $g \ge 3$, 
$$
(R_g,\psi_g) = (\overline S_{\frac{1}{g}},\overline \phi_{\frac{1}{g}}), 
$$
where $(S_{\frac{1}{g}}, \phi_{\frac{1}{g}}) = \iota(\frac{1}{g}) \in Q$.
\end{proposition}

\begin{proof}  It suffices to observe that $\phi_{\frac{1}{g}} = \zeta_g \delta_c \delta_b^{-1}\delta_a$.
\end{proof}

\begin{corollary}  For all $\r \in (0,\frac{1}{2})$, $(S_{\r},\phi_{\r})$ is pseudo-Anosov.
\end{corollary}

We now use Theorem~\ref{stability-thm} to show that $L(S_{\frac{1}{g}},\phi_{\frac{1}{g}})$ is bounded for $g \ge 2$.  By Theorem~\ref{stability-thm}
it suffices to show the following.

\begin{proposition}   The family  $Q$ is a bi-stable quotient family.
\end{proposition}
\begin{proof}
The support of $\widetilde \eta$ is contained in the union of annular neighborhood of $\widetilde a, \widetilde b, \widetilde c$, which are
all contained in 
$$
\Sigma_0 \cup \Sigma_1
$$
and hence $m_0=1$
Thus, we have
$$
\widetilde \eta (\Sigma_0) \subset \Sigma_0 \cup \Sigma_1,
$$
and
$$
\widetilde \zeta \widetilde \eta (\Sigma_0 \cup \Sigma_1) \subset \Sigma_1 \cup \Sigma_2.
$$
On $\Sigma_1$, $\delta_{\widetilde a}$ and $\delta_{\widetilde b}$ act trivially, while the
map $\delta_{\widetilde c}$ sends $\Sigma_1 \cup \Sigma_2$ to the union of $\Sigma_1 \cup \Sigma_2$ and a small
annular neighborhood of $\widetilde c$.   Thus 
$$
\widetilde \zeta \widetilde \eta(\Sigma_1 \cup \Sigma_2) \subset \Sigma_2 \cup \Sigma_3 \cup \widetilde \zeta(A_{\widetilde c})
$$
where $A_{\widetilde c} \subset \Sigma_0 \cup \Sigma_1$ is a small  annular neighborhood of $\widetilde c$.
The map $\widetilde \eta$ acts trivially on $\Sigma_2 \cup \Sigma_3 \cup \widetilde \zeta(A_{\widetilde c})$. 
Thus, we have, for all $x \in \Sigma_0$,
$$
(\widetilde \zeta \widetilde \eta)^3(x) = \widetilde \zeta (\widetilde \zeta \widetilde \eta)^2(x).
$$

Now consider $Q(\widetilde S,\widetilde \zeta^{-1},\widetilde \eta^{-1})$.
The support of 
$$
\eta^{-1} = \delta_{\widetilde a} \delta_{\widetilde b}^{-1} \delta_{\widetilde c}
$$
is $\Sigma_0 \cup \Sigma_1$ and its image is the union of annuli around $\widetilde a$, $\widetilde b$ and $\widetilde c$.
Thus
$$
\widetilde \zeta^{-1}\widetilde \eta^{-1} (\Sigma_0 \cup \Sigma_1) \subset \Sigma_{-1} \cup \widetilde \zeta^{-1}(A_{\widetilde c})
$$
The action of $\delta_{\widetilde c}$ acts trivially on this set, and hence
$$
(\widetilde \zeta^{-1}\widetilde \eta^{-1})^2(\Sigma_0 \cup \Sigma_1) \subset \Sigma_{-2} \cup \Sigma_{-1}.
$$
The map $\widetilde \eta^{-1}$ acts trivially on this set, and hence $Q$ is bi-stable.
\end{proof}

By Theorem~\ref{stability-thm} it follows that
$$
\lim_{g \rightarrow \infty} L(S_{\frac{1}{g}},\phi_{\frac{1}{g}}) =  L(S_0,\phi_0)
$$
where  $S_0 = \widetilde S/\widetilde \zeta$ and $\phi_0$ is defined by the $\zeta$-equivariant map $\widetilde \phi$.
This gives an alternative to Penner's proof in  \cite{Penner91}  that for some constant $C > 0$ 
$$
\log(\lambda(\phi_{\frac{1}{g}})) \leq \frac{C}{g}
$$
for $g \ge 2$.

\subsection{Limiting mapping class}
We describe the limiting mapping class $(S,\phi)$ for $Q$ at $0$  explicitly.  Figure~\ref{Pennerwedge-fig} gives a picture of 
$S = \widetilde S/\zeta$ with images $a,b$, $c$ of the curves $\widetilde a,\widetilde b$ and $\widetilde c$, and
the image $d$ of $\widetilde d = \Sigma_0 \cap \zeta(\Sigma_0)$.
Then, since the map $\widetilde \zeta$ descends to the identity map, $\phi$ is the mapping class on the torus with two 
boundary components given by the composition
$\phi = \delta_{\r} \circ \delta_b^{-1} \circ \delta_a$.

\begin{figure}[htbp]
\begin{center}
\includegraphics[height=1.5in]{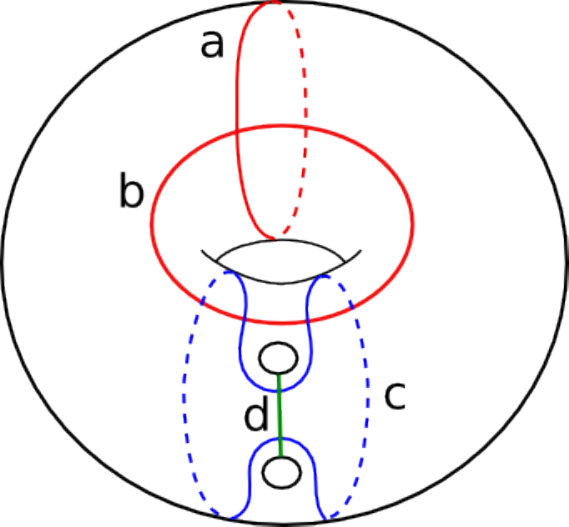}
\caption{The limiting mapping class for Penner's sequence.}
\label{Pennerwedge-fig}
\end{center}
\end{figure}

\subsection{Alexander and Teichm\"uller polynomial}
Let $M$ be the mapping torus of the quotient family $Q$.

\begin{proposition} The first Betti number of $M$ equals $2$.
\end{proposition}

\begin{proof}
The first cohomology group of $H^1(S;\Z)$ is generated by duals to $[a]$, $[b]$ and $[d]$, the 
relative homology classes defined by $a,b$ and $d$ in $H_1(S,\partial S;\Z)$.
With respect to this basis, the action of $\phi$ on the first cohomology group $H^1(S,\Z)$  is given by
$$
\left [
\begin{array}{ccc}
1 & 1 & 0\\
1 & 2 & 0\\
0 & 0 & 1
\end{array}
\right ]
$$
The  invariant cohomology is 1-dimensional, and hence
$b_1(M) = 2$.   
\end{proof}

The cohomology class generating the invariant cohomology is dual to the path $d$ between the two punctures on $S$, and
the corresponding  cyclic covering $\widetilde S \rightarrow S_0$ is the one drawn in
 Figure~\ref{original_lifts-fig}, with fundamental domain $\Sigma = S\setminus [d]$  the surface $S$ slit at
 $d$.  Let $\widetilde \zeta$ generate the group
of covering automorphisms of $\R \times \widetilde S \rightarrow M$.  Then
$Z = \widetilde \zeta\times\{\mbox{id}\}$ and $T = T_{\widetilde \phi}$ define covering automorphisms, and hence
generators for $H_1(M;\Z)$.    
Let $u,t \in H^1(M;\Z)$ be duals to $Z$ and $T$ respectively.  

Let $\tt$ be the train track for $\phi_0$ given by smoothings near intersections of the union of $a,b$ and $c$ as in 
Figure~\ref{Pennertt-fig}.  
Endow $a,b$ and $c$ with orientations.  
\begin{figure}[htbp] 
   \centering
   \includegraphics[width=3in]{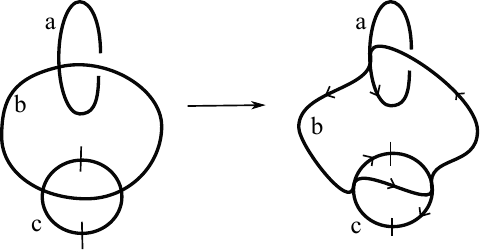} 
   \caption{Traintrack  $\tt$ for the quotient mapping class $(S_0,\phi_0)$.}
   \label{Pennertt-fig}
\end{figure} 

Then $\tt$ lifts to a train track with oriented edges $\widetilde \tt$ on $\widetilde S$.
To understand the action of $\widetilde \phi$ it suffices to know where $\widetilde \phi$ sends each 
closed curve the lifts  $\widetilde a, \widetilde b$ and $\widetilde c$  of $a,b$ and $c$ on $\widetilde S$.
For example, $\delta_{\widetilde c}$ acts by the identity on $\widetilde a$ and 
$\widetilde c$ and $\delta_{\widetilde c} (\widetilde b)$ is shown in Figure~\ref{Pennerttcovering-fig}. 
(see Figure~\ref{Pennerttcovering-fig}).
\begin{figure}[htbp] 
   \centering
   \includegraphics[width=3in]{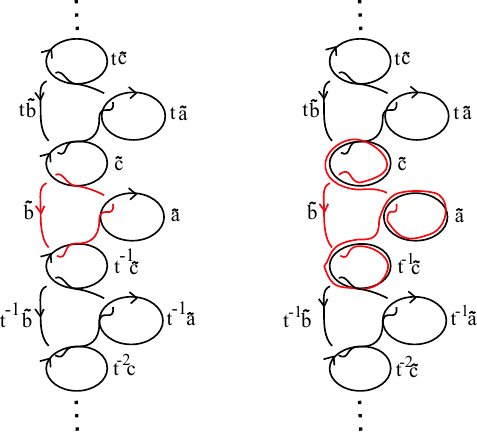} 
   \caption{Lift of $\tt$ to $\widetilde S$ and action of $\delta_{\widetilde c}$ on $\widetilde b$.}
   \label{Pennerttcovering-fig}
\end{figure}

When considering the algebraic action of $\widetilde \phi_0$, we count $\widetilde c^-$ as $-\widetilde c$.  Thus the
algebraic action of $\widetilde \phi$ on $(\Z[t,t^{-1}])^{\widetilde a,\widetilde b,\widetilde c}$ as the matrix
$$
\mathcal A = \left [
\begin{array}{ccc}
1 &  1 & 0\\
1 & 2 & 1-t \\
 1 - t^{-1} &2(1 - t^{-1}) & 1 + (1-t)(1-t^{-1})
 \end{array}
 \right ].
 $$
 The Alexander polynomial $\Delta$ is the characteristic polynomial of the action of
 the $\widetilde\phi$ on  $H_1(\widetilde S;\Z)$ as a $\Z[t,t^{-1}]$-module.   More precisely, there is a free presentation of
 $H_1(\widetilde M;\Z)$ as a $\Z[t,t^{-1}]$-module given by
 $$
\xymatrix{
(\widetilde \Z[t,t^{-1}])^{\widetilde  a,\widetilde b,\widetilde c}\ \ar[r]^{\mathcal A - u I} &\ (\widetilde \Z[t,t^{-1}])^{\widetilde a,\widetilde b,\widetilde c} \ar[r]
&H_1(\widetilde M;\Z),\\
}
$$
and the determinant of $\mathcal A - u I$ is the generator of the first fitting ideal for this presentation, that is, the {\it Alexander polynomial}
of $M$.  In our example, this is given by.
\begin{eqnarray*}\label{Alex-eqn}
 \Delta(u,t) = \Theta(u,-t) = u^2 - u(5 - t -  t^{-1}) + 1.
 \end{eqnarray*}
 
Considering $\widetilde a,\widetilde b$ and $\widetilde c$ as simple closed curves, we forget the sign of $\widetilde c^-$
and the action of $\widetilde \phi$ on $(\Z[t,t^{-1}])^{\widetilde a,\widetilde b,\widetilde c}$ becomes
 $$
\left [
\begin{array}{ccc}
1 &  1 & 0\\
1 & 2 & 1+t \\
 1 + t^{-1} &2(1 + t^{-1}) & 1 + (1+t)(1+t^{-1}).
 \end{array}
 \right ].
 $$
Thus the Teichm\"uller polynomial  is the characteristic polynomial of this matrix:
\begin{eqnarray*}
 \Theta(u,t) = u^2 - u(5 + t+  t^{-1} ) + 1.
\end{eqnarray*}

 \begin{remark} {\em In general, our group $H$ is a quotient group of $G$ isomorphic to $\Z \times \Z$, and $\Delta$ is the image of $\Delta_M$ in $\Z[H]$.
 Since in this example, $b_1(M) =2$,  $\Delta(u,t)$ is the Alexander polynomial $\Delta_M$ of $M$ and not a specialization.
  }
 \end{remark}
 
 \subsection{Fibered face}
The fibered face of a 3-manifold $M$ associated to a flow equivalence class can be found explicitly from 
the Tecihm\"uller polynomial of the face, and the Alexander polynomial of $M$ by a result of McMullen 
\cite{McMullen:Alex}, which we recall here.

Let $H$ be a finitely generated free abelian group.  Write $f \in  \Z H$ as
$$
f= \sum_{h \in H_0} a_h h
$$
where $H_0 \subset H$ is a finite subset, and $a_h \neq 0$ for all $h \in H_0$.   This representation for $f$
is unique, and we call $H_0$ the {\it support} of $f$.
If $H_0$ is in general position in $H \otimes \R$ considered as a Euclidean space,
then there is a corresponding norm on $\Hom(H;\R)$ given by
$$
||\alpha ||_f = \max \{|\alpha(h_1) - \alpha(h_2)| \ : \ h_1,h_2 \in H_0\},
$$
and the norm ball for $|| \ ||_f$ is convex polyhedral.

McMullen showed in \cite{McMullen:Alex} that if $F$ is a fibered face of a hyperbolic 3-manifold,  $\Delta$ and $\Theta_F$ 
are the Alexander and Teichm\"uller polynomials, and $b_1(M) \ge 2$, then the Thurston norm $|| \ ||$ restricted to the cone $V = F \cdot \R^+$
has the property that
$$
|| \alpha || = || \alpha ||_\Delta \leq || \alpha ||_{\Theta_F}
$$
for all $\alpha \in V_F$.  

\begin{lemma}\label{PennerCone-lem} The fibered cone $C$ in $H^1(M;\R)$ associated to Penner wheels
is given by elements $(a,b) \in H^1(M;\R)$, satisfying
 $$
a > | b |,
 $$
 and the Thurston norm is given by
 $$
 ||(a,b)||_T = \max \{2|a|, 2|b|\}.
 $$
 \end{lemma}

\subsection{Dilatations  and normalized dilatations}

The dilatation $\lambda(\phi_\alpha)$ corresponding to primitive
integral points $\alpha = (a,b)$ in $V$
is the largest solution of the polynomial equation
$$
\Theta(x^{a},x^{b}) = 0.
$$ 
In particular, 
Penner's examples $(R_g,\psi_g)$ correspond to the points $(g,1) \in V$, and 
we have the following.

\begin{proposition} The 
dilatation of $\psi_g$ is given by the largest root of the polynomial
$$
\Theta(x^g,x) = x^{2g} - x^{g+1} - 5x^{g} - x^{g-1} + 1.
$$
\end{proposition}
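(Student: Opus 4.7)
The plan is to apply McMullen's specialization formula for the Teichm\"uller polynomial, which is essentially stated in the displayed equation above the proposition: for a primitive integral element $(a,b)\in C_\psi$, the dilatation $\lambda(\phi_{(a,b)})$ is the largest real root of $\Theta(x^a,x^b)=0$. Since $\Theta(u,t)=u^2-u(5+t+t^{-1})+1$ has already been computed, and since Penner's mapping class $\phi_g$ has been identified with the primitive point $(g,1)\in C_\psi$, the proof reduces to direct substitution.

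More precisely, I would first justify the identification $\phi_g\leftrightarrow(g,1)$ by placing Penner's sequence inside the quotient family framework of Section~\ref{quotient-sec}. Taking $\widehat\phi=\delta_c\delta_b^{-1}\delta_a$ as the supported mapping class on the fundamental domain $\Sigma_0$ of the $\Z$-surface $(\widetilde S,\zeta)$ depicted in Figure~\ref{original_lifts-fig}, the mapping class $\phi_g$ is exactly $\phi_{1/g}$ in the associated quotient family $\sQ(\widetilde S,\zeta,\widehat\phi)$. By Theorem~\ref{main1-thm}, this corresponds to the primitive integral element $\alpha(1/g)=Z^*+gT^*$ on the fibered face $F$, which in the $(\psi,\mu)$-coordinates used for $C_\psi$ reads as $(g,1)$.

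The second step is the substitution itself. Setting $u=x^g$ and $t=x$ in $\Theta$ gives
\begin{eqnarray*}
\Theta(x^g,x) &=& (x^g)^2 - x^g(5+x+x^{-1}) + 1\\
&=& x^{2g} - x^{g+1} - 5x^g - x^{g-1} + 1,
\end{eqnarray*}
which is precisely the polynomial in the proposition. The dilatation $\lambda(\phi_g)$ is then its largest real root.

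There is no significant obstacle beyond careful bookkeeping: the Teichm\"uller polynomial, the fibered cone structure, and the specialization formula are all in place, and the identification of $\phi_g$ with $(g,1)$ is a direct consequence of Theorem~\ref{main1-thm} applied to the specific triple $(\widetilde S,\zeta,\widehat\phi)$ arising from Penner's construction. The mildest subtlety is ensuring that the coordinate conventions used in writing down the twisted transition matrix match those used to identify $(g,1)$, which can be checked against the generators $u,t\in H_1(M;\Z)$ dual to $Z^*,T^*$ fixed earlier in Section~\ref{Penner-sec}.
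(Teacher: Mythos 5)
Your proposal is correct and follows the paper's own route: the paper likewise treats this proposition as an immediate consequence of the specialization property of the Teichm\"uller polynomial $\Theta(u,t)=u^2-u(5+t+t^{-1})+1$ on the fibered cone, together with the identification of Penner's $(S_g,\phi_g)$ with the primitive points $(g,1)\in C_\psi$ coming from the quotient-family parameterization $\alpha(\tfrac1g)=Z^*+gT^*$ of Theorem~\ref{main1-thm}. The substitution $u=x^g$, $t=x$ you carry out is exactly the computation the paper leaves implicit.
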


The limiting mapping class $(S_0,\phi_0)$ corresponds to the specialization 
$$
\theta(x) = \Theta(x,1) = x^2 - 7x +1,
$$
so $\lambda(\phi_0) = \frac{1}{2} (7 + 3\sqrt{5})\approx 6.8541$.

The symmetry of $\Theta$ with respect to $x \mapsto -x$  and convexity of $L$ on fibered
faces implies
 the minimum normalized dilatation realized on the fibered face
must occur at $(a,b) = (1,0)$, i.e., at th emonodromy $(S_0,\phi_0)$.  Thus, we have the following.

\begin{proposition} For all monodromies $(S',\phi')$ of primitive integral elements in the cone $V$,
$$
L(S',\phi') \ge L(S_0,\phi_0)  = \lambda(\phi_0)^2 \approx 46.9787.
$$
\end{proposition}

\subsection{Orientability}
A pseudo-Anosov mapping class is {\it orientable} if it has orientable invariant foliations, 
 or equivalently the geometric and homological dilatations are the same, and
 the spectral radius of the homological action is realized by a real (possibly negative)
 eigenvalue (see, for example, \cite{LT09} p. 5).   
 Given a polynomial $f$, the largest complex 
norm amongst its roots is called the {\it house of $f$}, denoted $|f|$.   
Thus, $\psi_g$ is orientable if and only if
\begin{eqnarray}
|\Delta(x^g,x))|= |\Theta(x^g,x)|.
\end{eqnarray}

\begin{proposition} The mapping classes $(R_g,\psi_g)$ are orientable if and only
if $g$ is even.
\end{proposition}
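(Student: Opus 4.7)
The plan is to invoke the orientability criterion stated just before the proposition---$\phi_g$ is orientable iff $|\Delta(x^g,x)|=|\Theta(x^g,x)|=\lambda_g$, where $\lambda_g$ is the Perron root of $\Theta(x^g,x)$---and then to use the identity $\Delta(u,t)=\Theta(u,-t)$ from (\ref{Alex-eqn}) together with the defining relation $\Theta(\lambda_g^g,\lambda_g)=0$. The two cases of the parity of $g$ will be treated separately.

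For $g$ even, the identity gives $\Delta(x^g,x) = \Theta(x^g,-x) = \Theta((-x)^g,-x)$, where the last equality uses $(-x)^g=x^g$. Writing $y=-x$, this says the polynomial $\Delta(x^g,x)$ in the variable $x$ is obtained from $\Theta(y^g,y)$ by the substitution $y\mapsto-x$, so the roots of $\Delta(x^g,x)$ are exactly the negatives of the roots of $\Theta(x^g,x)$. In particular the houses coincide, certifying orientability.

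For $g$ odd, I would verify that neither $+\lambda_g$ nor $-\lambda_g$ is a root of $\Delta(x^g,x)$. If $\Delta(\lambda_g^g,\lambda_g)=0$, then $\lambda_g^g+\lambda_g^{-g} = 5 - \lambda_g - \lambda_g^{-1}$; subtracting the relation $\lambda_g^g+\lambda_g^{-g} = 5 + \lambda_g + \lambda_g^{-1}$ from $\Theta(\lambda_g^g,\lambda_g)=0$ forces $\lambda_g+\lambda_g^{-1}=0$, impossible for $\lambda_g>0$. Using the identity once more,
\[
\Delta\bigl((-\lambda_g)^g,-\lambda_g\bigr) = \Theta\bigl((-\lambda_g)^g,\lambda_g\bigr) = \Theta(-\lambda_g^g,\lambda_g) = \lambda_g^{2g} + \lambda_g^g\bigl(5+\lambda_g+\lambda_g^{-1}\bigr) + 1,
\]
which is a strict sum of positive terms, hence nonzero. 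So for $g$ odd, neither $\pm\lambda_g$ is a root of $\Delta(x^g,x)$, and by the criterion $\phi_g$ is not orientable.

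The only genuinely subtle step is the odd case: the failure of $\pm\lambda_g$ to appear as a root of $\Delta(x^g,x)$ does not by itself rule out a complex root of $\Delta(x^g,x)$ of modulus $\lambda_g$. This is precisely why the orientability criterion recalled from \cite{LT09} is phrased with the extra condition that the homological spectral radius be realized by a \emph{real} eigenvalue: once that half of the criterion is invoked, non-realizability of $\pm\lambda_g$ as a root of $\Delta(x^g,x)$ translates directly into non-orientability. With this reduction in hand, the whole proof is a short sign calculation.
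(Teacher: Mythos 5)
Your proof is correct and follows essentially the same route as the paper's: both arguments rest on the identity $\Delta(u,t)=\Theta(u,-t)$ and a parity sign check, the only difference being that you substitute the Perron root of $\Theta(x^g,x)$ into $\Delta(x^g,x)$ while the paper substitutes the extremal real root of $\Delta(x^g,x)$ into $\Theta(x^g,x)$. Your explicit appeal to the real-eigenvalue part of the orientability criterion to rule out a complex root of $\Delta(x^g,x)$ of modulus $\lambda_g$ in the odd case is a point the paper leaves implicit, and is a welcome clarification rather than a departure.
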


\begin{proof}   The homological dilatation of $\psi_g$
is the largest complex norm amongst roots of 
$$
\Delta(x^g,x) = x^{2g} + x^{g+1} - 5 x^g + x^{g-1}  + 1. 
$$
Let $\lambda$ be the  real root of $\Delta(x^g,x)$ with largest absolute value.
Plugging $\lambda$ into $\Theta(x^g,x)$ gives 
$$
\Theta(\lambda^g,\lambda) = -2\lambda^{g+1} - 2 \lambda^{g-1} \neq 0.
$$
while for $-\lambda$ we have
$$
\Theta(-\lambda^g,-\lambda) = (-\lambda)^{g+1} -( \lambda)^{g+1} + (-\lambda)^{g-1} - (\lambda^{g-1})
$$
which equals 0 if and only if $g$ is even.
\end{proof}

\subsection{Boundary behavior}

By Lemma~\ref{PennerCone-lem} we can extend the parameterization 
$$
\mathfrak f: I_2 = (0,\frac{1}{2}) \rightarrow F
$$
  to
\begin{eqnarray*}
\mathfrak f:  (-1,1) &\rightarrow& F\\
\r & \mapsto & \frac{1}{|\chi(\r)|}(1,\r),
\end{eqnarray*}

\begin{lemma}\label{revPenner-lem}
The sequence of mapping classes associated to  $\mathfrak f  (\frac{n-1}{n})$  is conjugate to 
$$
(\widetilde S/\widetilde \zeta^n, \zeta_n \delta_{\zeta_n^{-1}(c)} \delta_{b}^{-1} \delta_a).
$$
\end{lemma}

\begin{proof}
Let $R : \widetilde S \rightarrow \widetilde S$ be a rotation around an axis that passes through $\widetilde a \cup \widetilde b$
in 3 points, preserves each of $\widetilde a$ and $\widetilde b$, and $R \widetilde c = \zeta^{-1}(\widetilde c)$.
Then we have 
\begin{eqnarray*}
R^{-1}  \widetilde \zeta  R  &=& \widetilde \zeta^{-1}\\
R^{-1}  \delta_{\widetilde a}  R &=& \delta_{\widetilde a}\\
R^{-1}  \delta_{\widetilde b}  R &=& \delta_{\widetilde b}\\
R^{-1}  \delta_{\widetilde c} R &=&  \delta_{\widetilde \zeta^{-1}(\widetilde c)}^{-1}.
\end{eqnarray*}

We have
$$
\widetilde \zeta^{-1} \widetilde \eta = \widetilde \zeta^{-1} \delta_{\widetilde c} \delta_{\widetilde b}^{-1} \delta_{\widetilde a} 
=
\widetilde \zeta^{-1} R\delta_{\widetilde \zeta^{-1}(c)}R  \delta_{\widetilde b}^{-1} \delta_{\widetilde a} 
$$
Conjugating by $R$ we have
\begin{eqnarray*}
R\widetilde \zeta^{-1} \widetilde \eta R &=& R\widetilde \zeta^{-1} R\delta_{\widetilde \zeta^{-1}(c)}R \delta_{\widetilde b}^{-1} \delta_{\widetilde a} R\\
&=&\widetilde \zeta \delta_{\widetilde \zeta^{-1}(c)} \delta_{\widetilde b}^{-1} \delta_{\widetilde a}
\end{eqnarray*}
\end{proof}

By Lemma~\ref{revPenner-lem}, the mapping classes $(S_{\frac{n-1}{n}},\phi_{\frac{n-1}{n}})$, also known as the {\it reverse Penner sequence},
are the same as the mapping classes for
the sequence $\mathfrak (\frac{1}{n})$ in the
 family $Q'= Q(\widetilde S, \widetilde \zeta, \delta_{\zeta^{-1}(\widetilde c)} \delta_{\widetilde b}^{-1} \delta_{\widetilde a})$.
 In fact, $Q$ and $Q'$ are equal, but parameterized so that $\iota'(c) = \iota(1-c)$.
By Lemma~\ref{PennerCone-lem}, $Q'$ is not stable, and it follows that $\lim_{n \rightarrow \infty} L(S_{\frac{n-1}n},\phi_{\frac{n-1}n}) = \infty$.

\section{How common are quotient families?}\label{open-sec}
A set of pseudo-Anosov mapping classes $\sF \subset \sP$ has  {\it small dilatation} if there is some $L$ so that for each $(S,\phi) \in \sF$,
$L(S,\phi) < L$, or in other words, 
$$
\log (\lambda(\phi)) < \frac{ \log (L)}{|\chi(S)}.
$$

Govem a pseudo-Anosov mapping class $(S,\phi)$, let $(S^0,\phi^0)$ be the mapping class obtained  by removing any singularities of the 
invariant stable foliation.   Then $(S^0,\phi^0)$ is called the {\it fully-punctured representative} of $(S,\phi)$. 
Given a collection $\sF$ of pseudo-Anosov mapping classes, let $\sF^0$ be the collection of fully-punctured representatives of elements of $\sF$.
 One may ask if the following analog of the
Farb-Leininger-Margalit University Finiteness Theorem is true.

\begin{question} [Penner-wheel question]  If $\sF$ is a small dilatation collection of  pseudo-Anosov mapping classes, is $\sF^0$ contained in a finite union of 
(fully-punctured) quotient families?  
\end{question}

One can also weaken the question as follows.

\begin{definition}{\em For $\kappa > 0$, a
pseudo-Anosov mapping class $(S,\phi)$ is
{\it $\kappa$-quasi-periodic} if there is a subsurface $Y \subset S$  and a mapping class $\zeta: Y \rightarrow Y$
such that
\begin{enumerate} 
\item for some $m$, $\zeta^m$ is a product of Dehn twists along boundary parallel curves on $Y$ (i.e. $\zeta$ is {\it periodic rel boundary on $Y$}); and
\item the support of $\zeta\phi$ has topological Euler characteristic $\chi$ bounded by
$$
- \kappa < \chi< 0.
$$
\end{enumerate}
In other words, $(x,\phi)$ is $\kappa$-quasi-periodic if $\phi = \zeta \eta$ for some $\eta$ with $\kappa$-small support.

A mapping class $(S,\phi)$ is {\it strongly $\kappa$-quasi-periodic} if $Y = S$.
A family of mapping classes $\sF \subset \sP$
is  {\it (strongly) quasi-periodic}  if for some $\kappa > 0$ all its members are {\it (strongly) $\kappa$-quasi-periodic. }}
\end{definition}

\begin{question} [Ferris-wheel question]  If $\sF$ is a small dilatation collection of  pseudo-Anosov mapping classes, is $\sF^0$ contained in a quasi-periodic family?  
\end{question}

Penner-type sequences and quotient families of mapping classes are strongly  quasi-periodic, where we can take 
$\kappa = m_1|\chi(\Sigma)|$.  Small dilatations families such as those found in \cite{HK:braidbounds} and \cite{Hironaka:LT}
are quasi-periodic in the weaker sense, and have not been shown to be strongly quasi-periodic.

\bibliographystyle{../../math}
\bibliography{../../math}

\end{document}